\newcommand{\N}{\mathbb{N}}
\newcommand{\Z}{\mathbb{Z}}
\newcommand{\Q}{\mathbb{Q}}
\newcommand{\sym}{\mathfrak{S}}
\newcommand{\Zmap}{\overline{\mathfrak{Z}}}
\newcommand{\h}{\mathfrak{H}}
\newcommand{\vece}{\vec{e}\,}
\newcommand{\vecj}{\vec{\jmath}\,}
\theoremstyle{theorem}
\newtheorem{theorem}{Theorem}[section]
\newtheorem{proposition}[theorem]{Proposition}
\newtheorem{lemma}[theorem]{Lemma}
\newtheorem{corollary}[theorem]{Corollary}
\newtheorem{conjecture}[theorem]{Conjecture}
\theoremstyle{definition}
\newtheorem{remark}[theorem]{Remark}
\title{On some multiple zeta-star values of one-two-three indices}
\author{Koji Tasaka and Shuji Yamamoto}
\date{}
\begin{document}

\maketitle

\begin{abstract}
In this paper, we present some identities for multiple zeta-star values 
with indices obtained by inserting 3 or 1 into the string $2,\ldots,2$. 
Our identities give analogues of Zagier's evaluation of 
$\zeta(2,\ldots,2,3,2,\ldots, 2)$ and examples of a kind of duality 
of multiple zeta-star values. 
Moreover, their generalizations give partial solutions of conjectures 
proposed by Imatomi, Tanaka, Wakabayashi and the first author. 
\end{abstract}

\section{Introduction}
Multiple zeta values and multiple zeta-star values 
(MZVs and MZSVs for short) are defined by the convergent series 
\begin{align*} 
\zeta(k_1,k_2,\ldots,k_n)&=\sum_{m_1>m_2>\cdots>m_n>0}
\frac{1}{m_1^{k_1}m_2^{k_2}\cdots m_n^{k_n}}, \\ 
\zeta^\star(k_1,k_2,\ldots,k_n)&=\sum_{m_1\geq m_2\geq\cdots\geq m_n\geq 1}
\frac{1}{m_1^{k_1}m_2^{k_2}\cdots m_n^{k_n}}, 
\end{align*}
where $k_1,k_2,\ldots,k_n$ are positive integers with $k_1\geq 2$. 
The weight and depth of the above series are by definition 
the integers $k=k_1+\cdots+k_n$ and $n$, respectively. 

We are interested in $\Q$-linear relations among these real values. 
This topic has been studied by many mathematicians and physicists. 
In this paper, we prove the following new $\Q$-linear relations among MZSVs, 
which are conjectured by M.~Kaneko in his unpublished work: 

\begin{theorem}\label{thm:main} 
For any positive integers $n$ and $m$, we have
\begin{align}
\label{eq:main1} 
&\zeta^\star(\{2\}^m,1)\cdot\zeta^\star(\{ 2 \}^n,1)
=\zeta^\star(\{2\}^m,1,\{2\}^n,1)+\zeta^\star(\{2\}^n,1,\{2\}^m,1), \\
\label{eq:main2} 
&\zeta^\star(\{2\}^m,1)\cdot\zeta^\star(\{2\}^n)
=\zeta^\star(\{2\}^m,1,\{2\}^n)+\zeta^\star(\{2\}^{n-1},3,\{2\}^m), \\
\label{eq:main3} 
&\zeta^\star(\{2\}^m)\cdot\zeta^\star(\{2\}^n)
=\zeta^\star(\{2\}^{m-1},3,\{2\}^{n-1},1)
+\zeta^\star(\{2\}^{n-1},3,\{2\}^{m-1},1), 
\end{align}
where $\{2\}^n$ stands for the $n$-tuple of $2$.
\end{theorem}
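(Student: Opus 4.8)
The plan is to pass from the defining series to iterated integrals, turn each product on the left into a shuffle, and then show that the shuffle collapses to the two advertised terms.

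First I would record the one-form representation of MZSVs. Writing $\omega_0=\frac{dt}{t}$, $\omega_1=\frac{dt}{1-t}$ and $\mu=\omega_0+\omega_1=\frac{dt}{t(1-t)}$, and using the convention
\[
\int_0^1\phi_1\cdots\phi_k=\int_{1>t_1>\cdots>t_k>0}\phi_1(t_1)\cdots\phi_k(t_k),
\]
one has
\[
\zeta^\star(k_1,\ldots,k_n)=\int_0^1\omega_0^{k_1-1}\mu\,\omega_0^{k_2-1}\mu\cdots\mu\,\omega_0^{k_n-1}\omega_1 .
\]
This follows at once from the elementary expansion $\zeta^\star(k_1,\ldots,k_n)=\sum \zeta(k_1\,\square\,\cdots\,\square\,k_n)$ over all choices of ``$,$'' or ``$+$'' for each $\square$, since replacing each internal separator by $\mu=\omega_0+\omega_1$ reproduces exactly the sum over merges. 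Abbreviating words in $x=\omega_0$, $y=\omega_1$ and $\mu$, and writing $I(w):=\int_0^1 w$, this yields the dictionary $\zeta^\star(\{2\}^m,1)=I\big((x\mu)^m y\big)$ and $\zeta^\star(\{2\}^n)=I\big((x\mu)^{n-1}x\,y\big)$; likewise every MZSV on the right-hand sides is a single explicit word, e.g.\ $\zeta^\star(\{2\}^m,1,\{2\}^n,1)=I\big((x\mu)^m\mu(x\mu)^n y\big)$ and $\zeta^\star(\{2\}^{n-1},3,\{2\}^m)=I\big((x\mu)^{n-1}x^2\mu(x\mu)^{m-1}x\,y\big)$.

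Every word above begins with $x$ and ends with $y$, so all integrals converge, and the shuffle-product formula for iterated integrals gives $I(w_1)\,I(w_2)=\sum_{w}I(w)$, the sum over all interleavings of $w_1$ and $w_2$. Thus each of \eqref{eq:main1}--\eqref{eq:main3} becomes the statement that a shuffle of two explicit words equals, as an iterated integral, the sum of the two words on its right. For instance \eqref{eq:main1} is equivalent to
\[
I\big((x\mu)^m y\diamond(x\mu)^n y\big)=I\big((x\mu)^m\mu(x\mu)^n y\big)+I\big((x\mu)^n\mu(x\mu)^m y\big),
\]
where $\diamond$ denotes the shuffle product, and similarly for \eqref{eq:main2} and \eqref{eq:main3}.

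The heart of the matter is this collapse: the left-hand shuffle is a sum of exponentially many words, and I must show that as a real number (equivalently, after expanding every $\mu$ as $\omega_0+\omega_1$ and invoking the standard linear relations among MZVs) it reduces to just the two terms on the right. I would attack this by induction on $m+n$, peeling off the outermost variable via the deconcatenation recursion $xU\diamond xV=x(U\diamond xV)+x(xU\diamond V)$ and using integration-by-parts relations to rewrite the non-extremal interleavings as smaller instances of the theorem. The presence of $\mu$ is essential: expanding one $\mu$ as $\omega_0+\omega_1$ is precisely what turns a would-be ``$2,2$'' junction into the ``$3$'' (the pattern $x^2\mu$) visible in \eqref{eq:main2} and \eqref{eq:main3}, and what glues the two blocks by the extra separator $\mu$ in \eqref{eq:main1}. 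I expect this step to be the main obstacle, since it is exactly where the identities stop being formal and genuine MZV relations are needed; as a reassurance that the bookkeeping closes, the base cases $m=n=1$ reduce, after expanding $\mu$, to classical low-weight relations such as $\zeta(2,1)=\zeta(3)$ and $\zeta(2,2)+\zeta(3,1)=\zeta(4)$.

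Should the inductive cancellation prove unwieldy, I would instead realize $I(w_1)\,I(w_2)$ as an integral over the product of two simplices and decompose the product polytope directly. The two ``pure'' chambers, in which one chain lies entirely above the other, produce the two advertised terms, while the remaining chambers, whose integrands carry the factor $\mu$ along the diagonal facets, should recombine under the involution $t\mapsto 1-t$ (which fixes $\mu$ and interchanges $x$ and $y$). This region-decomposition viewpoint makes transparent why exactly two terms survive and why a ``$3$'' or a gluing ``$\mu$'' appears at the junction, and it is the version I would expect to extend to the generalizations and conjectures mentioned in the abstract.
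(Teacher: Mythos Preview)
Your framework is correct as far as it goes: the integral representation $\zeta^\star(k_1,\ldots,k_n)=I(\omega_0^{k_1-1}\mu\cdots\mu\,\omega_0^{k_n-1}\omega_1)$ is valid, and the shuffle formula for iterated integrals does convert each left-hand side into a large linear combination of such integrals. But the proof stops precisely at the place you yourself flag as ``the main obstacle'': you have not supplied any mechanism by which the $\binom{2m+2n+2}{2m+1}$ shuffle terms collapse to two. The deconcatenation recursion $xU\diamond xV=x(U\diamond xV)+x(xU\diamond V)$ only reorganizes the shuffle; it does not by itself produce cancellation, and ``integration-by-parts relations'' is not specific enough to be checked. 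Likewise, in your region-decomposition sketch, the two ``pure'' chambers give $I\bigl((x\mu)^m y\,(x\mu)^n y\bigr)$ and its partner, \emph{not} $I\bigl((x\mu)^m \mu\,(x\mu)^n y\bigr)$: the junction letter is $y$, not $\mu$, so already the extremal chambers fail to match the right-hand side, and the hoped-for pairing of the remaining chambers under $t\mapsto 1-t$ is left entirely unargued. As written, the proposal is a plan, not a proof.

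The paper takes two quite different, and in the end much more direct, routes. For \eqref{eq:main2} it works with the \emph{truncated} generating series $F_p,F_p^\star,G_p,G_p^\star$, obtains the matrix recursion of Lemma~\ref{lem:FG}, and extracts from it the finite identity of Proposition~\ref{prop:ccbaa} expressing $\zeta_p^\star(\{c\}^m,b,\{a\}^n)$ as an alternating sum of products $\zeta_p(\{a\}^l,b,\{c\}^k)\,\zeta_p^\star(\{c\}^{m-k})\,\zeta_p^\star(\{a\}^{n-l})$. Applying this twice (with $(a,b,c)=(2,1,2)$ and $(2,3,2)$) and subtracting, the cross terms cancel in the limit $p\to\infty$ \emph{exactly} by the classical MZV duality $\zeta(\{2\}^l,1,\{2\}^k)=\zeta(\{2\}^k,3,\{2\}^{l-1})$; that duality is the concrete MZV relation doing the work that your sketch leaves unspecified. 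For \eqref{eq:main1} and \eqref{eq:main3} the paper does not touch integrals at all: it proves a partial-fraction lemma (Lemma~\ref{lem:C_duality}) for the sums $\sum_{p_0\le p}C_j(p,p_0)\frac{q}{p_0(p_0+q)}$, and then a telescoping argument over the series (the $E(k),F(k)$ in the proof of Theorem~\ref{thm:sec3main}) gives both identities at once, with no appeal to any MZV relation beyond arithmetic of $\frac{1}{p(p+q)}$. If you want to rescue your approach, the missing ingredient is an explicit identity playing the role of Lemma~\ref{lem:C_duality} or of the MZV duality \eqref{eq:duality}; without one, the shuffle will not collapse.
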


The identity \eqref{eq:main2} will be shown in Section 2, 
and \eqref{eq:main1} and \eqref{eq:main3} in Section 3. 
In this introduction, we make some remarks on these formulas. 

\bigskip 

First, we note that the identity \eqref{eq:main1} is already shown 
by Ohno and Zudilin (\cite{OZ}) using their `two-one formula' 
\[\zeta^\star(\{2\}^m,1,\{2\}^n,1)=4\zeta^\star (2m+1,2n+1)-2\zeta(2m+2n+2)\]
of depth $2$. 
Our proof of \eqref{eq:main1} is, however, fairly simpler than theirs 
and also works for proving \eqref{eq:main3} almost identically. 
Moreover, the same method leads to the following generalizations 
of \eqref{eq:main1} and \eqref{eq:main3}. 

\begin{theorem}\label{thm:1,3ext}
Let $n>0$ be an integer. 
\begin{enumerate}
\item[\upshape (i)] 
For any non-negative integers $j_1,j_2,\ldots,j_n$ such that 
$j_1,j_n\geq 1$, we have
\begin{equation}\label{eq:1ext} 
\sum_{k=0}^n (-1)^k \zeta^\star(\{2\}^{j_1},1,\ldots,\{2\}^{j_k},1)\cdot
\zeta^\star(\{2\}^{j_n},1,\ldots,\{2\}^{j_{k+1}},1)=0. 
\end{equation}
\item[\upshape (ii)] 
For any non-negative integers $j_1,j_2,\ldots, j_{2n}$, we have
\begin{equation}\label{eq:3ext} 
\begin{split}
&\sum_{k=0}^n 
\zeta^\star(\{2\}^{j_1},3,\{2\}^{j_2},1,\ldots,3,\{2\}^{j_{2k}},1)\cdot
\zeta^\star(\{2\}^{j_{2n}},3,\{2\}^{j_{2n-1}},1,\ldots,3,\{2\}^{j_{2k+1}},1)\\
&=\sum_{k=1}^n \zeta^\star(\{2\}^{j_1},3,\{2\}^{j_2},1,\ldots,
1,\{2\}^{j_{2k-1}+1})\cdot
\zeta^\star(\{2\}^{j_{2n}},3,\{2\}^{j_{2n-1}},1,\ldots,1,\{2\}^{j_{2k+1}+1}).
\end{split}
\end{equation}
\end{enumerate}
\end{theorem}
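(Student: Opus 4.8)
The plan is to realize every multiple zeta-star value occurring in the statement as an iterated integral on the simplex and to deduce both identities from a single combinatorial relation among such integrals. Set $\omega_0=\frac{dt}{t}$, $\omega_1=\frac{dt}{1-t}$ and $\eta=\omega_0+\omega_1=\frac{dt}{t(1-t)}$. Because a star value is the sum of the ordinary values obtained by collapsing adjacent arguments in all possible ways, the usual series-to-integral dictionary amounts to replacing every interior separator by $\eta$, giving for example
\[
\zeta^\star(\{2\}^{j_1},1,\ldots,\{2\}^{j_k},1)=I\bigl(a_1\,\eta\,a_2\,\eta\cdots\eta\,a_k\,\omega_1\bigr),\qquad a_i:=(\omega_0\eta)^{j_i},
\]
where $I(\,\cdot\,)$ denotes the iterated integral over $0<t_1<\cdots<1$. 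An argument $3$ contributes the factor $\omega_0^2\eta$, so the values in part~(ii) have the same shape, with the interior separators now alternating between $\omega_0^2\eta$ (for each $3$) and $\eta$ (for each $1$). First I would record these representations as a lemma, together with the product rule that writes $I(u)\,I(v)$ as the integral of the shuffle of $u$ and $v$, which is just Fubini on the product of two simplices.

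For (i), write the two factors of the $k$-th summand as $Z_k=I(u_k)$ and $W_{k+1}=I(v_{k+1})$, and observe the recursive feature that $u_k$ is obtained from $u_{k-1}$, and $v_k$ from $v_{k+1}$, by the same operation of replacing the terminal $\omega_1$ with $\eta\,a\,\omega_1$. I would prove the master relation $\sum_{k=0}^n(-1)^kZ_kW_{k+1}=0$ by induction on $n$, peeling off the outermost blocks $a_1$ and $a_n$: each product $I(u_k)\,I(v_{k+1})$ is expanded by the shuffle rule, and at the single junction where the two integration ranges meet one uses the elementary fact that two unordered variables contribute the sum of their two orderings. This is exactly the step that, for $n=2$, turns the middle term into the two concatenations appearing in \eqref{eq:main1}; for general $n$ the shuffle expansions of consecutive summands share precisely the terms that cancel under the alternating sign, so that the total sum vanishes. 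Thus (i) is the many-block version of the mechanism behind \eqref{eq:main1}.

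For (ii) I would run the same induction, but now each interior $\eta$ at a junction must be split as $\omega_0+\omega_1$. The $\omega_1$-half preserves a genuine argument boundary and reassembles into the values carrying the pattern $3,\ldots,1$, i.e. the products $A_kB_{k+1}$ on the left-hand side. The $\omega_0$-half instead fuses the terminal $\omega_1$ of one factor with the adjacent $(\omega_0\eta)$-block, lengthening that string of $2$'s by one while deleting the trailing $1$; this is exactly the passage from $\zeta^\star(\ldots,\{2\}^{j},1)$-type data to the $\zeta^\star(\ldots,\{2\}^{j+1})$-type values that constitute the products $C_kD_{k+1}$ on the right-hand side. Identity (ii) is then the assertion that, after this splitting is carried out at every junction, the two halves reorganize into the claimed equality of the two sums.

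The hard part will be the bookkeeping in this last step: controlling the simultaneous redistribution of every boundary $\eta=\omega_0+\omega_1$ across all junctions at once, and checking that the $\omega_0$-contributions assemble into precisely the products $C_kD_{k+1}$ with the correct index shifts and block ranges, leaving no extra terms. I also expect the degenerate cases $j_i=0$ (empty $2$-blocks) and the boundary conventions at $k=0$ and $k=n$ to need separate, careful attention, since there the representations above collapse. Part (i) should then fall out as the special case in which all separators are $\eta$ and the $\omega_0$-merges produce no surviving contribution, so that a single argument handles both statements.
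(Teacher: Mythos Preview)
Your proposal takes a genuinely different route from the paper, but the central mechanism you rely on does not work as stated.

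The crux of your argument is the claim that, for $n=2$, the shuffle expansion of $I(a_1\omega_1)\cdot I(a_2\omega_1)$ ``turns the middle term into the two concatenations'' $I(a_1\eta a_2\omega_1)+I(a_2\eta a_1\omega_1)$. This is not what the shuffle product does. The shuffle of two words of lengths $r$ and $s$ produces $\binom{r+s}{r}$ interleavings, not two; for $a_1=a_2=\omega_0\eta$ one gets twenty terms, and these are not equal, term by term or after any evident regrouping of integrands, to the two concatenated words. The identity \eqref{eq:main1} is of course true, but it is \emph{not} a shuffle identity: for instance, the analogous statement with $a_1=a_2=\omega_0$ (i.e.\ $\zeta(2)^2$ versus $2\zeta^\star(2,2)$) is already false, differing by $\zeta(4)$. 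So whatever makes (i) hold is sensitive to the precise $\{2\}^j,1$ structure in a way the shuffle rule is not. Your phrase ``at the single junction where the two integration ranges meet'' suggests splitting the product simplex according to the order of one distinguished pair of variables; but the two resulting regions are not simplices, and neither half is an iterated integral of a concatenated word. The inductive step you outline, and the $\omega_0/\omega_1$ splitting for part~(ii), inherit this gap: nothing in the proposal supplies a reason why the non-concatenation shuffles should cancel across the alternating sum.

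By contrast, the paper never uses iterated integrals here. It works directly with the series and proves a more general Theorem~\ref{thm:sec3main} from which both (i) and (ii) are read off. The key device is the partial-fraction identity
\[
\frac{1}{p\,q}=\frac{1}{p(p+q)}+\frac{1}{q(p+q)},
\]
together with its $e_k=3$ variant, which lets one write each product $X(k)$ as $E(k)+F(k)$; a short ``duality'' lemma for the building blocks $C_j(A,B)=\sum_{A\ge a_1\ge\cdots\ge a_j\ge B}a_1^{-2}\cdots a_j^{-2}$ then shows $E(k)=F(k-1)$, and the alternating sum telescopes to zero. This partial-fraction/telescoping mechanism has no counterpart in your sketch, and it is precisely what encodes the special role of the trailing $1$ (and of the $3$'s in part~(ii)). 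If you want to salvage an integral-flavoured proof, the honest route is to translate that series identity into integrals rather than to invoke the shuffle product.
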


In fact, we prove even more general formula than Theorem \ref{thm:1,3ext} 
(see Theorem \ref{thm:sec3main}). 

The identity \eqref{eq:3ext} has an application to the following conjecture: 

\begin{conjecture}[{\cite[Conjecture 4.5]{TWIT}}]\label{conj:twit}
\begin{enumerate}
\item[$(A)$] 
Let $n$ be a positive integer, and 
$j_0,j_1,\ldots,j_{2n-1}$ non-negative integers. 
Put $m=j_0+j_1+\cdots+j_{2n-1}$. Then we have
\[\sum_{\sigma\in\sym_{2n}} 
\zeta^\star(\{2\}^{j_{\sigma(0)}},3,\{2\}^{j_{\sigma(1)}},1,
\{2\}^{j_{\sigma(2)}},\ldots,3,\{2\}^{j_{\sigma(2n-1)}},1)
\overset{?}{\in} \Q\cdot\pi^{2m+4n}.\]
\item[$(B)$] 
Let $n, j_0,j_1,\ldots,j_{2n}$ be non-negative integers. 
Put $m=j_0+j_1+\cdots+j_{2n}$. Then we have
\[\sum_{\sigma\in\sym_{2n+1}} 
\zeta^\star(\{2\}^{j_{\sigma(0)}},3,\{2\}^{j_{\sigma(1)}},1,
\{2 \}^{j_{\sigma(2)}},\ldots,3,\{2 \}^{j_{\sigma(2n-1)}},1,
\{2\}^{j_{\sigma(2n)}+1})
\overset{?}{\in} \Q\cdot\pi^{2m+4n+2}.\]
\end{enumerate}
\end{conjecture}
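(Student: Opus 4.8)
The plan is to derive the conjecture from the product identity~\eqref{eq:3ext} together with its full generalization, Theorem~\ref{thm:sec3main}, exploiting the classical evaluation $\zeta^\star(\{2\}^a)\in\Q\cdot\pi^{2a}$. Since any $\Q$-linear combination of products of such values again lies in $\Q\cdot\pi^{2w}$ for the total weight $w$, it will suffice to express each symmetric sum in $(A)$ and $(B)$ as such a combination. The first observation is that \eqref{eq:3ext} is exactly a bilinear bridge between the two families in the conjecture: the summands on its left-hand side are $(A)$-type values (an even number of $\{2\}$-blocks, a trailing $1$), while those on the right-hand side are $(B)$-type values (an odd number of blocks, a trailing $\{2\}^{\bullet+1}$).

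I would then run a coupled induction on $n$ for $(A)$ and $(B)$ simultaneously. The base cases follow from Theorem~\ref{thm:main}: for $(A)$ with $n=1$, formula~\eqref{eq:main3} applied with $(m,n)\mapsto(j_0+1,j_1+1)$ evaluates the two-term symmetric sum as the single product $\zeta^\star(\{2\}^{j_0+1})\cdot\zeta^\star(\{2\}^{j_1+1})\in\Q\cdot\pi^{2(j_0+j_1)+4}$, and for $(B)$ the case $n=0$ is the bare value $\zeta^\star(\{2\}^{j_0+1})$, with the first nontrivial case supplied by~\eqref{eq:main2}. For the inductive step I would symmetrize~\eqref{eq:3ext} over all orderings of the index multiset $\{j_1,\dots,j_{2n}\}$. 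The extreme terms $k=0$ and $k=n$ on the left carry an empty factor equal to $1$, so after symmetrization they reproduce (twice) the full $(A)$-type symmetric sum; each interior term is a product of a forward and a backward one-sided value, and summing it over $\sym_{2n}$ splits as $\sum_{S}(\text{$(A)$-sum over $S$})\cdot(\text{$(A)$-sum over $S^{\mathrm c}$})$ over subsets $S$ with $2k$ elements, i.e.\ a sum of products of strictly shorter symmetric sums. By the induction hypothesis these interior products are rational multiples of powers of $\pi$. Carrying out the same symmetrization on the right-hand side produces the full $(B)$-type symmetric sum together with analogous interior products of shorter $(B)$-sums. The symmetrized identity thus becomes a linear relation coupling the two target sums to quantities already known to lie in $\Q\cdot\pi^{\bullet}$, and combining it with the parallel specialization of Theorem~\ref{thm:sec3main} adapted to $(B)$ should let one solve for both.

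The main obstacle is precisely the mismatch between the two notions of sum: \eqref{eq:3ext} fixes the block order $j_1,\dots,j_{2n}$ and sums only over the splitting position $k$, whereas $(A)$ and $(B)$ sum over all of $\sym_{2n}$ and $\sym_{2n+1}$. Bridging them demands that, after symmetrization, the interior products reorganize with the correct multiplicities into genuine lower symmetric sums, and that the coupled linear system in the full $(A)$- and $(B)$-sums be nondegenerate so that no non-$\pi$ remainder survives. Tracking these multiplicities, together with the endpoint and empty-factor conventions implicit in~\eqref{eq:3ext} and the ``$\,+1\,$'' shifts on the right, is the delicate combinatorial heart of the argument. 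I expect the induction to close cleanly only for restricted index patterns---most transparently when all but the extremal $j_i$ vanish---which already yields the partial solutions promised in the abstract; obtaining the conjecture in full would, I suspect, require recasting the symmetrization in a generating-function or Hopf-algebraic framework rather than tracking it term by term.
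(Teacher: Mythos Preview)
The statement you are trying to prove is a \emph{conjecture}; the paper does not prove it, and neither does your proposal. What the paper actually establishes is the conditional Theorem~\ref{thm:TWIT A_n}: assuming $(A_l)$ and $(B_l)$ for all $l<n$, one deduces $(A_n)$. Your symmetrization of \eqref{eq:3ext} over $\sym_{2n}$ is exactly the paper's argument for that theorem, and the combinatorics you worry about (reorganizing the interior products into genuine lower symmetric sums) works cleanly there---see \eqref{eq:A_n} and \eqref{eq:B_l}. So the $(A)$-half of your coupled induction is correct and matches the paper.

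The gap is the $(B)$-half. You assert that ``the parallel specialization of Theorem~\ref{thm:sec3main} adapted to $(B)$'' furnishes a second equation letting you solve for both sums, but you never write it down, and in fact no such specialization exists within Theorem~\ref{thm:sec3main}. In that theorem the extreme terms $X(0)$ and $X(n)$ always carry $e_0=e_n=1$, so the full-length factor they produce is of the form $\Zmap_{n+1}(\,\cdot_+,\,\cdot\,)$, i.e.\ ends in a trailing $1$---an $(A)$-shape. A $(B)$-shape factor $\Zmap(\,\cdot^+,\,\cdot\,)$ arises only from interior $k$ with $e_k=3$, where \emph{both} factors are strictly shorter; hence symmetrizing never isolates the full $(B_n)$-sum against a trivial factor. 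Consequently your induction cannot close: already $(B_1)$ is not supplied by these tools (contrary to your remark, \eqref{eq:main2} concerns $\zeta^\star(\{2\}^m,1,\{2\}^n)$ and $\zeta^\star(\{2\}^{n-1},3,\{2\}^m)$, not the depth-three $(B_1)$ pattern $\zeta^\star(\{2\}^{a},3,\{2\}^{b},1,\{2\}^{c+1})$), and without $(B_1)$ even $(A_2)$ remains out of reach. The paper is explicit that only $(B_0)\Rightarrow(A_1)$ is unconditionally obtained; the rest is left as a partial, conditional result.
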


We denote by $(A_n)$ (resp.\ $(B_n)$) 
the statement $(A)$ (resp.\ $(B)$) in Conjecture \ref{conj:twit} 
for a specific value of $n$. For example, $(B_0)$ means that 
$\zeta^\star(\{2\}^{j+1})\in\Q\cdot\pi^{2j+2}$ for $j\geq 0$, 
which is already known (see \cite{Zl}). 
On the other hand, $(A_1)$ is a consequence of $(B_0)$ and 
the identity \eqref{eq:main3} of Theorem \ref{thm:main}. 
This implication $(B_0)\implies(A_1)$ is generalized as follows: 

\begin{theorem}\label{thm:TWIT A_n}
Let $n$ be a positive integer. 
If the statements $(A_l)$ and $(B_l)$ hold for all $l<n$, 
then $(A_n)$ is also true. 
\end{theorem}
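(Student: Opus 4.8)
The plan is to deduce $(A_n)$ from the identity \eqref{eq:3ext} by averaging it over all relabelings of the index data and then recognizing every resulting piece, apart from the wanted sum itself, as a product of lower-order quantities controlled by the induction hypotheses. For a tuple $\vec{\jmath}=(j_1,\dots,j_{2l})$ write
\[
\mathcal{A}(\vec{\jmath})=\sum_{\sigma\in\sym_{2l}}\zeta^\star(\{2\}^{j_{\sigma(1)}},3,\{2\}^{j_{\sigma(2)}},1,\dots,3,\{2\}^{j_{\sigma(2l)}},1),
\]
and for a tuple of odd length $(j_1,\dots,j_{2l+1})$ write $\mathcal{B}$ for the corresponding symmetric sum of $(B_l)$-type star values. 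Both are symmetric functions of their arguments, and $(A_n)$ is exactly the statement $\mathcal{A}(\vec{\jmath})\in\Q\cdot\pi^{2m+4n}$ for every $\vec{\jmath}$ of length $2n$ with $m=j_1+\dots+j_{2n}$; the hypotheses $(A_l)$ and $(B_l)$ for $l<n$ say that $\mathcal{A}$ of any tuple of even length $<2n$, and $\mathcal{B}$ of any tuple of odd length $<2n$, lie in the corresponding $\Q$-line spanned by a power of $\pi$.

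First I would fix $\vec{\jmath}$ of length $2n$ and sum the whole identity \eqref{eq:3ext} over all $(2n)!$ permutations of its entries. On the left-hand side the two end terms, $k=0$ and $k=n$, are single $(A_n)$-type values, one in the reversed and one in the given order; since reversal is a bijection of $\sym_{2n}$, each of them averages to $\mathcal{A}(\vec{\jmath})$, so together they contribute $2\,\mathcal{A}(\vec{\jmath})$. For each intermediate $k$ with $1\le k\le n-1$ the term is a product of an $(A_k)$-value built from the first $2k$ entries and an $(A_{n-k})$-value built from the last $2(n-k)$ entries; writing $\sym_{2n}$ as a choice of a $2k$-element subset $A\subset\{1,\dots,2n\}$ followed by independent orderings of $A$ and of its complement, this contribution becomes $\sum_{|A|=2k}\mathcal{A}(\vec{\jmath}_A)\,\mathcal{A}(\vec{\jmath}_{A^c})$, where $\vec{\jmath}_A$ denotes the subtuple indexed by $A$. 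The averaged left-hand side is therefore $2\,\mathcal{A}(\vec{\jmath})$ plus a sum of products of two $\mathcal{A}$-factors of even lengths $2k$ and $2(n-k)$, both strictly between $0$ and $2n$.

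Next I would run the same averaging on the right-hand side. Each term there is a product of two star values whose index strings have odd length and terminate in a block $\{2\}^{\bullet+1}$; a weight count pins down their lengths as $2k-1$ and $2(n-k)+1$, so after summing over $\sym_{2n}$ the $k$-th term becomes $\sum_{|A|=2k-1}\mathcal{B}(\vec{\jmath}_A)\,\mathcal{B}(\vec{\jmath}_{A^c})$. Thus the averaged identity reads
\begin{align*}
2\,\mathcal{A}(\vec{\jmath})+\sum_{k=1}^{n-1}\sum_{|A|=2k}\mathcal{A}(\vec{\jmath}_A)\,\mathcal{A}(\vec{\jmath}_{A^c})
=\sum_{k=1}^{n}\sum_{|A|=2k-1}\mathcal{B}(\vec{\jmath}_A)\,\mathcal{B}(\vec{\jmath}_{A^c}),
\end{align*}
and on both sides every $\mathcal{A}$- or $\mathcal{B}$-factor has order at most $n-1$. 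By the hypotheses $(A_l),(B_l)$ for $l<n$ each such factor is a rational multiple of a power of $\pi$; since every term is homogeneous of weight $2m+4n$ and products of $\pi$-powers add exponents, each summand other than $2\,\mathcal{A}(\vec{\jmath})$ lies in $\Q\cdot\pi^{2m+4n}$. Hence $2\,\mathcal{A}(\vec{\jmath})\in\Q\cdot\pi^{2m+4n}$, and dividing by $2$ proves $(A_n)$. For $n=1$ the intermediate sum is empty and this reproduces precisely the implication $(B_0)\Rightarrow(A_1)$ coming from \eqref{eq:main3}.

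The step I expect to be the real work is the combinatorial reorganization, not any new analytic ingredient. I must check that averaging \eqref{eq:3ext} over $\sym_{2n}$ regroups each product exactly into the stated sum over subsets $A$, with the correct multiplicities and with the two end terms both collapsing to $\mathcal{A}(\vec{\jmath})$; and, most delicately, that on the right-hand side the two factors really use complementary subtuples of odd sizes $2k-1$ and $2(n-k)+1$, that is, that no index is dropped and the parity is as the weight count demands, so that each factor is a genuine $\mathcal{B}$-sum covered by $(B_{k-1})$ and $(B_{n-k})$. Once the common weight $2m+4n$ of all terms is verified, the induction closes at once.
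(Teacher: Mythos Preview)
Your proposal is correct and follows essentially the same line as the paper's own proof: average \eqref{eq:3ext} over $\sym_{2n}$, extract $2\mathcal{A}(\vec{\jmath})$ from the two extreme terms, and observe that every remaining summand factors as a product of two lower-order symmetric sums (either $\mathcal{A}$-type of even size $<2n$ or $\mathcal{B}$-type of odd size $<2n$) covered by the hypotheses $(A_l),(B_l)$ for $l<n$. The paper carries out exactly the combinatorial regrouping you flag as ``the real work,'' confirming that the two factors in each product term use complementary subtuples of the index set, so your concern is well placed and is resolved precisely as you anticipate.
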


\bigskip

Next, we explain two topics related with the identity \eqref{eq:main2}. 
The first is on the `duality' for MZSVs. 
The first example of such phenomena was found by Kaneko-Ohno \cite{KO}, 
who proved that 
\begin{equation}\label{eq:KOduality}
(-1)^{n+1}\zeta^\star(m+1,\{1\}^n)-(-1)^{m+1}\zeta^\star(n+1,\{1\}^m)
\in\Q[\zeta(2),\zeta(3),\zeta(5),\ldots]. 
\end{equation}
This is regarded as an analogue of the duality of MZVs 
\[\zeta(m+1,\{1\}^{n-1})-\zeta(n+1,\{1\}^{m-1})=0. \]
They also formulated a conjecture which generalizes \eqref{eq:KOduality}, 
and recently Li \cite{Li} and Yamazaki \cite{Ya} proved this conjecture 
using generalized hypergeometric functions. 

On the other hand, since $\zeta^\star(\{2\}^m,1)=2\zeta(2m+1)$ and 
$\zeta^\star(\{2\}^n)=2(1-2^{1-2n})\zeta(2n)$, 
our formula \eqref{eq:main2} implies the following: 

\begin{corollary}
For integers $m,n\geq 1$, we have 
\begin{equation}\label{eq:duality star}
(-1)^{m+n+1}\zeta^\star(\{2\}^m,1,\{2\}^n)
-(-1)^{m+n}\zeta^\star(\{2\}^{n-1},3,\{2\}^m)
\in\Q[\zeta(2),\zeta(3),\zeta(5),\ldots]. 
\end{equation}
\end{corollary}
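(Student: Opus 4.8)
The plan is to derive the corollary directly from identity \eqref{eq:main2} of Theorem~\ref{thm:main}, which we are permitted to assume. The key observation is that the right-hand side of \eqref{eq:main2} contains exactly the two MZSVs appearing in \eqref{eq:duality star}, while the left-hand side is a \emph{product} of two MZSVs of lower depth. If each factor in that product can be shown to lie in the ring $\Q[\zeta(2),\zeta(3),\zeta(5),\ldots]$, then so does their product, and hence the alternating combination on the right will inherit membership in this ring after a sign bookkeeping.

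The first step is to record the two explicit evaluations supplied in the excerpt: namely $\zeta^\star(\{2\}^m,1)=2\zeta(2m+1)$ and $\zeta^\star(\{2\}^n)=2(1-2^{1-2n})\zeta(2n)$. The former is a rational multiple of the single zeta value $\zeta(2m+1)$, an odd-weight value explicitly listed among the generators of our ring; the latter is a rational multiple of $\zeta(2n)$, which is itself a rational multiple of a power of $\zeta(2)$ by the classical Euler formula, so it too lies in the ring. Consequently the product $\zeta^\star(\{2\}^m,1)\cdot\zeta^\star(\{2\}^n)$ belongs to $\Q[\zeta(2),\zeta(3),\zeta(5),\ldots]$.

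The second step is to rearrange \eqref{eq:main2} to isolate the desired combination. Writing \eqref{eq:main2} as
\begin{equation*}
\zeta^\star(\{2\}^m,1,\{2\}^n)+\zeta^\star(\{2\}^{n-1},3,\{2\}^m)
=\zeta^\star(\{2\}^m,1)\cdot\zeta^\star(\{2\}^n),
\end{equation*}
I would multiply through by the overall sign $(-1)^{m+n+1}$ and then observe that
\begin{equation*}
(-1)^{m+n+1}\zeta^\star(\{2\}^m,1,\{2\}^n)
-(-1)^{m+n}\zeta^\star(\{2\}^{n-1},3,\{2\}^m)
\end{equation*}
differs from $(-1)^{m+n+1}$ times the left-hand side of \eqref{eq:main2} only in the sign of the second summand, since $-(-1)^{m+n}=(-1)^{m+n+1}$. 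Thus the two expressions agree, and the combination equals $(-1)^{m+n+1}$ times the product from Step~1, which already lies in the ring.

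There is essentially no obstacle here: the entire content is contained in Theorem~\ref{thm:main}, and the corollary is a formal consequence once the two scalar evaluations are invoked. The only point requiring a modicum of care is the sign bookkeeping, to confirm that the alternating combination in \eqref{eq:duality star} matches the \emph{sum} on the right-hand side of \eqref{eq:main2} up to the global factor $(-1)^{m+n+1}$; this is routine and follows from the identity $-(-1)^{m+n}=(-1)^{m+n+1}$.
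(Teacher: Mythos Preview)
Your argument is correct and is exactly the approach indicated in the paper: the authors simply note that $\zeta^\star(\{2\}^m,1)=2\zeta(2m+1)$ and $\zeta^\star(\{2\}^n)=2(1-2^{1-2n})\zeta(2n)$, and then invoke \eqref{eq:main2}. Your sign bookkeeping is fine, since $-(-1)^{m+n}=(-1)^{m+n+1}$ makes the expression in \eqref{eq:duality star} equal to $(-1)^{m+n+1}$ times the sum $\zeta^\star(\{2\}^m,1,\{2\}^n)+\zeta^\star(\{2\}^{n-1},3,\{2\}^m)$.
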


This might be regarded as an analogue of the duality 
\begin{equation}\label{eq:duality}
\zeta(\{2\}^m,1\{2\}^n)-\zeta(\{2\}^n,3,\{2\}^{m-1})=0. 
\end{equation}
We remark that \eqref{eq:duality star} is not contained 
in the Kaneko-Ohno conjecture. 
This suggests that the `duality' for MZSVs might be open to further extension, 
but we will not develop this point in the present paper. 

The second topic is inspired by Zagier's work \cite{Z} on an evaluation of 
$\zeta(\{2\}^m,3,\{2 \}^n)$ which plays an important role 
in Brown's partial settlement of Hoffman's basis conjecture for MZVs 
(\cite{B1}, see also \cite{B2}). 
In fact, Zagier also proved an analogous evaluation of MZSV: 
\begin{equation}\label{eq:22322}
\begin{split}
\zeta^\star&(\{2\}^m,3,\{2 \}^n)\\
&=-2\sum_{r=1}^{m+n+1}\Biggl(\binom{2r}{2n}-\delta_{r,n}
-(1-4^{-r})\binom{2r}{2m+1}\Biggr)
\,\zeta(2r+1)\,\zeta^\star(\{2 \}^{m+n+1-r}). 
\end{split}
\end{equation}
Note that, since the duality \eqref{eq:duality} holds, 
the formula for $\zeta(\{2\}^m,3,\{2 \}^n)$ 
may also be regarded as an evaluation of $\zeta(\{2\}^{m+1},1,\{2\}^n)$. 
On the other hand, an evaluation for $\zeta^\star(\{2\}^{m+1},1,\{2\}^{n})$ 
can be obtained by combining \eqref{eq:22322} 
with our result \eqref{eq:main2}: 

\begin{theorem}\label{thm:22122} 
For any non-negative integers $m,n$, we have 
\begin{equation}\label{eq:22122}
\begin{split}
\zeta^\star&(\{2\}^{m+1},1,\{2\}^{n})\\
&=2\sum_{r=1}^{m+n+1}\Biggl(\binom{2r}{2m+2}-(1-4^{-r})\binom{2r}{2n-1}\Biggr)
\,\zeta(2r+1)\,\zeta^\star(\{2\}^{m+n+1-r}). 
\end{split}
\end{equation}
\end{theorem}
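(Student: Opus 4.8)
The plan is to prove Theorem~\ref{thm:22122} by combining the two evaluations we already have at our disposal: Zagier's star-value formula~\eqref{eq:22322} for $\zeta^\star(\{2\}^m,3,\{2\}^n)$ and our own identity~\eqref{eq:main2}. The point is that~\eqref{eq:main2} expresses the target quantity $\zeta^\star(\{2\}^{m+1},1,\{2\}^n)$ in terms of a product of known values and a single star value of the form $\zeta^\star(\{2\}^{n-1},3,\{2\}^{m+1})$, and the latter is itself evaluated by~\eqref{eq:22322}. So the whole argument is a substitution followed by a bookkeeping computation; there is no new analytic input.

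Concretely, I would first rewrite~\eqref{eq:main2} with $m$ replaced by $m+1$, so that it reads
\begin{equation*}
\zeta^\star(\{2\}^{m+1},1)\cdot\zeta^\star(\{2\}^n)
=\zeta^\star(\{2\}^{m+1},1,\{2\}^n)+\zeta^\star(\{2\}^{n-1},3,\{2\}^{m+1}),
\end{equation*}
and solve for the desired term:
\begin{equation*}
\zeta^\star(\{2\}^{m+1},1,\{2\}^n)
=\zeta^\star(\{2\}^{m+1},1)\cdot\zeta^\star(\{2\}^n)
-\zeta^\star(\{2\}^{n-1},3,\{2\}^{m+1}).
\end{equation*}
Next I would feed into the subtracted term Zagier's formula~\eqref{eq:22322}, taking care to apply it with the correct parameters: the string $\zeta^\star(\{2\}^{n-1},3,\{2\}^{m+1})$ matches $\zeta^\star(\{2\}^{m'},3,\{2\}^{n'})$ with $m'=n-1$ and $n'=m+1$, so the binomials become $\binom{2r}{2m+2}$ and $\binom{2r}{2n-1}$ and the Kronecker delta becomes $\delta_{r,m+1}$. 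Finally, the leading product $\zeta^\star(\{2\}^{m+1},1)\cdot\zeta^\star(\{2\}^n)$ must be expanded using the closed forms $\zeta^\star(\{2\}^{m+1},1)=2\zeta(2m+3)$ and $\zeta^\star(\{2\}^n)=2(1-2^{1-2n})\zeta(2n)$ recorded in the excerpt, rewritten so that it appears as the $r=m+1$ contribution in the same $\zeta(2r+1)\,\zeta^\star(\{2\}^{m+n+1-r})$ basis used on the right-hand side of~\eqref{eq:22122}.

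The main obstacle, and the only genuine work, is the final reconciliation of coefficients. After substitution the right-hand side is a sum of terms $c_r\,\zeta(2r+1)\,\zeta^\star(\{2\}^{m+n+1-r})$, and I must check that each $c_r$ agrees with the claimed coefficient $2\bigl(\binom{2r}{2m+2}-(1-4^{-r})\binom{2r}{2n-1}\bigr)$. For $r\neq m+1$ the contribution comes solely from negating~\eqref{eq:22322}, so the two binomial signs flip and one reads off agreement directly. The delicate index is $r=m+1$: here Zagier's formula contributes the extra $+2\,\delta_{r,n'}=+2\,\delta_{r,m+1}$ term (note $n'=m+1$), while the leading product $\zeta^\star(\{2\}^{m+1},1)\cdot\zeta^\star(\{2\}^n)=2\zeta(2m+3)\cdot 2(1-2^{1-2n})\zeta(2n)$ supplies precisely a term of the shape $\zeta(2m+3)\,\zeta^\star(\{2\}^n)$ sitting at $r=m+1$, since $\zeta^\star(\{2\}^n)=\zeta^\star(\{2\}^{m+n+1-r})\big|_{r=m+1}$. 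I would verify that the $\delta$-term from Zagier and the constant $2(1-2^{1-2n})$ from the product combine exactly into the claimed coefficient at $r=m+1$, using that $1-2^{1-2n}=1-2\cdot 4^{-n}$ and that $\binom{2m+2}{2m+2}=1$ while $\binom{2m+2}{2n-1}$ accounts for the remaining $4^{-r}$ weighting. Once this single index is checked, the identity~\eqref{eq:22122} follows.
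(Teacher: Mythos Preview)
Your approach is exactly the one the paper indicates: combine \eqref{eq:main2} (with $m\to m+1$) and Zagier's formula \eqref{eq:22322}. Two small corrections, though.

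First, the coefficient reconciliation at $r=m+1$ is simpler than you make it, and your description of it is slightly off. You should use the closed form $\zeta^\star(\{2\}^{m+1},1)=2\zeta(2m+3)$ for the \emph{first} factor only and leave $\zeta^\star(\{2\}^n)$ untouched, since it already equals $\zeta^\star(\{2\}^{m+n+1-r})\big|_{r=m+1}$. The product then contributes exactly $+2\,\zeta(2m+3)\,\zeta^\star(\{2\}^n)$, while the Kronecker delta in \eqref{eq:22322} (after negation) contributes $-2\,\zeta(2m+3)\,\zeta^\star(\{2\}^n)$; these cancel on the nose, leaving precisely $2\bigl(\binom{2r}{2m+2}-(1-4^{-r})\binom{2r}{2n-1}\bigr)$ at every $r$. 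There is no role for the constant $2(1-2^{1-2n})$ or for $4^{-n}$ here, and your suggestion that $\binom{2m+2}{2n-1}$ ``accounts for the remaining $4^{-r}$ weighting'' is not how the cancellation works.

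Second, identity \eqref{eq:main2} is stated for positive $n$, so the boundary case $n=0$ of \eqref{eq:22122} is not covered by your argument and must be checked directly; this is immediate, as both sides reduce to $2\zeta(2m+3)$.
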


We also remark that Theorem \ref{thm:22122} implies the following corollary, 
in the same way that \eqref{eq:22322} implies 
the corresponding result \cite[Theorem 2]{Z}: 

\begin{corollary}\label{cor:22122} 
For each odd integer $k\geq 3$, the $\Q$-vector space spanned by 
$\zeta^\star(\{2\}^{m+1},1,\{2\}^n)$ with $2m+2n+3=k$ and $m,n\geq 0$ is 
equal to the $\Q$-vector space spanned by 
$\pi^{2r}\zeta(k-2r)$ ($r=0,1,\ldots,(k-3)/2$). 
\end{corollary}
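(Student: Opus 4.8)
The plan is to use the explicit evaluation of Theorem~\ref{thm:22122} to reduce the statement to a finite linear-algebra problem over $\Q$. Fix an odd $k\geq 3$ and set $N=(k-3)/2$, so the multiple zeta-star values in question are exactly the $N+1$ numbers $\zeta^\star(\{2\}^{m+1},1,\{2\}^n)$ with $m+n=N$ and $0\le m\le N$; likewise the target family $\{\pi^{2s}\zeta(k-2s):0\le s\le N\}$ has $N+1$ elements. I write $V$ and $W$ for the $\Q$-spans of these two families inside $\R$, and aim to prove $V=W$.

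First I would record that $\zeta^\star(\{2\}^j)=c_j\,\pi^{2j}$ with $c_j\in\Q$, $c_j\neq0$, and $c_0=1$; this follows from the stated identity $\zeta^\star(\{2\}^j)=2(1-2^{1-2j})\zeta(2j)$ together with Euler's formula $\zeta(2j)\in\Q^{\times}\cdot\pi^{2j}$ (it is the known case $(B_0)$). Substituting this into Theorem~\ref{thm:22122} and putting $s=N+1-r$, each generator of $V$ becomes a $\Q$-linear combination
\begin{equation*}
\zeta^\star(\{2\}^{m+1},1,\{2\}^n)=\sum_{s=0}^{N}M_{m,s}\,\pi^{2s}\zeta(k-2s),
\end{equation*}
where
\begin{equation*}
M_{m,s}=2c_s\left(\binom{2(N+1-s)}{2m+2}-\bigl(1-4^{-(N+1-s)}\bigr)\binom{2(N+1-s)}{2(N-m)-1}\right),
\end{equation*}
of the elements of $W$. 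This already yields the inclusion $V\subseteq W$.

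For the reverse inclusion it suffices to prove that the $(N+1)\times(N+1)$ matrix $M=(M_{m,s})$ is invertible over $\Q$: then $M^{-1}$ expresses each $\pi^{2s}\zeta(k-2s)$ as a $\Q$-linear combination of the values $\zeta^\star(\{2\}^{m+1},1,\{2\}^n)$, giving $W\subseteq V$ and hence $V=W$. I emphasize that this is a purely rational statement, independent of any (conjectural) $\R$-linear independence of the $\pi^{2s}\zeta(k-2s)$. Since each column scalar $2c_s$ is nonzero, invertibility of $M$ is equivalent to the non-vanishing of the determinant of the binomial matrix $M'_{m,s}=\binom{2(N+1-s)}{2m+2}-(1-4^{-(N+1-s)})\binom{2(N+1-s)}{2(N-m)-1}$.

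I expect this determinant evaluation to be the main obstacle, and I would handle it exactly as in Zagier's deduction of \cite[Theorem~2]{Z} from \eqref{eq:22322}, whose coefficient matrix has the same shape. Concretely, I would try to show $\det M'\neq0$ either by triangularizing after suitable row and column operations—exploiting that $\binom{2(N+1-s)}{2m+2}$ vanishes as soon as $m+s>N$, which already forces an anti-triangular pattern on the first term—or by a generating-function argument encoding the even-indexed binomial coefficients, or, failing a clean closed form, by a $2$-adic valuation estimate that uses the factors $4^{-(N+1-s)}$ and the known $2$-adic structure of the $c_s$ to isolate a single nonzero leading contribution. Once $\det M'\neq0$ is established, Corollary~\ref{cor:22122} follows at once, as the star-analogue of \cite[Theorem~2]{Z}.
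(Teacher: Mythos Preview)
Your proposal is correct and follows essentially the same route as the paper: the paper gives no self-contained argument for Corollary~\ref{cor:22122} but simply asserts that Theorem~\ref{thm:22122} implies it ``in the same way that \eqref{eq:22322} implies the corresponding result \cite[Theorem~2]{Z}''. Your reduction to the invertibility of the rational $(N{+}1)\times(N{+}1)$ matrix, together with the appeal to Zagier's method for the analogous determinant, is exactly this deferred argument made explicit; the remaining determinant step is not carried out in the paper either.
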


\section{On the identity \eqref{eq:main2}}
In this section, we prove the identity \eqref{eq:main2} 
in Theorem \ref{thm:main}. 

For an integer $p\geq 0$, we denote by $\zeta_p (k_1,\ldots,k_n)$ 
(resp.\ $\zeta^\star_p (k_1,\ldots,k_n)$) 
the finite sum obtained by truncating the series for 
$\zeta (k_1,\ldots,k_n)$ (resp.\ $\zeta^\star (k_1,\ldots,k_n)$):
\begin{align*} 
\zeta_p (k_1,k_2,\ldots,k_n)
&=\sum_{p\geq p_1>p_2>\cdots>p_n>0} 
\frac{1}{p_1^{k_1}p_2^{k_2}\cdots p_n^{k_n}}\\ 
\Biggl(\text{resp.\ } 
\zeta^\star_p (k_1,k_2,\ldots,k_n)
&=\sum_{p\geq p_1\geq p_2\geq\cdots\geq p_n\geq 1} 
\frac{1}{p_1^{k_1}p_2^{k_2}\cdots p_n^{k_n}} \Biggr).
\end{align*}
The empty sum is interpreted as $0$, 
and for the unique index $\varnothing$ of depth $0$, 
we put $\zeta_p(\varnothing)=\zeta^\star_p(\varnothing)=1$ even if $p=0$. 

Let $a$, $b$ and $c$ be positive integers. 
Our strategy for proving the identity \eqref{eq:main2} is 
to study the four generating functions 
\begin{alignat*}{2}
F_p(x,y)&=\sum_{m,n\geq 0}\zeta_p(\{a\}^m,b,\{c\}^n)x^my^n,\quad & 
G_p(y)&=\sum_{n\geq0}\zeta_p(\{c\}^n)y^n,\\
F^\star_p(x,y)&=\sum_{m,n\geq 0}\zeta^\star_p(\{c\}^m,b,\{a\}^n)x^my^n,\quad & 
G^\star_p(y)&=\sum_{n\geq0}\zeta^\star_p(\{a\}^n)y^n.
\end{alignat*}
Note that $F_0(x,y)=F^\star_0(x,y)=0$ and $G_0(y)=G^\star_0(y)=1$. 

\begin{lemma}\label{lem:FG} 
For any integer $p\geq 0$, we have
\begin{align}
\label{eq:FG} 
\begin{pmatrix}F_p(x,y)\\ G_p(y)\end{pmatrix}
&=T_pT_{p-1}\cdots T_1\begin{pmatrix}0\\ 1\end{pmatrix}, \\
\label{eq:FGstar} 
\begin{pmatrix}F^\star_p(x,y)\\ G^\star_p(y)\end{pmatrix}
&=U_pU_{p-1}\cdots U_1\begin{pmatrix}0\\1\end{pmatrix},
\end{align}
where 
\begin{align*}
T_q=T_q(x,y)
&=\begin{pmatrix}1+\frac{x}{q^a} & \frac{1}{q^b} \\ 
0 & 1+\frac{y}{q^c}\end{pmatrix}, \\
U_q=U_q(x,y)
&=\biggl(1-\frac{x}{q^c}\biggr)^{-1}\biggl(1-\frac{y}{q^a}\biggr)^{-1}
\begin{pmatrix}1-\frac{y}{q^a} & \frac{1}{q^b} \\ 
0 & 1-\frac{x}{q^c}\end{pmatrix}.
\end{align*}
\end{lemma}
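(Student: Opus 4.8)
The plan is to prove both identities simultaneously by induction on $p$, setting up recurrence relations for the truncated generating functions. The key observation is that each truncated sum can be split according to whether the largest summation index equals $p$ or is strictly smaller than $p$. For the non-starred case, I would examine how $\zeta_p(\{a\}^m,b,\{c\}^n)$ decomposes: the outermost variable $p_1$ either equals $p$ (contributing a factor $1/p^a$ times a shorter sum involving $F_{p-1}$ or a term where $b$ is the first exponent) or satisfies $p_1\leq p-1$, giving exactly $F_{p-1}$. Translating this combinatorial splitting into generating-function language should yield a linear recurrence of the form $\binom{F_p}{G_p}=T_p\binom{F_{p-1}}{G_{p-1}}$, and iterating from the base case $F_0=0$, $G_0=1$ produces the claimed product formula.

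First I would carefully derive the recurrence for the non-starred pair, since the matrix $T_q$ is the more transparent of the two. The diagonal entry $1+x/q^a$ in position $(1,1)$ should come from the fact that in $F_p$, the block $\{a\}^m$ at the front either is inherited unchanged from $F_{p-1}$ (the $1$) or acquires one new leading $a$ equal to $p$ (the $x/p^a$); the off-diagonal entry $1/q^b$ encodes the transition where the leading exponent is $b$ rather than another $a$, connecting to the $G_{p-1}$ component; and the $(2,2)$ entry $1+y/q^c$ reflects the analogous self-recurrence for $G_p=\sum_n\zeta_p(\{c\}^n)y^n$. I would verify each entry by matching powers of $x$ and $y$ against the defining series, taking care with the boundary conventions (the empty index and the truncation at $p=0$).

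The starred case requires more care because the inequalities $m_i\geq m_{i+1}$ allow equalities, which is why the matrix $U_q$ carries the inverse prefactors $(1-x/q^c)^{-1}(1-y/q^a)^{-1}$. Here the splitting on whether the top index equals $p$ does not immediately decouple, since arbitrarily many indices can equal $p$; instead I would sum a geometric series over the number of leading exponents equal to $p$ in each block, which is precisely what produces the $(1-x/q^c)^{-1}$ and $(1-y/q^a)^{-1}$ factors. I expect this geometric-series bookkeeping to be the main obstacle: one must correctly account for how a run of equal top indices can interleave the $\{c\}$-block, the distinguished exponent $b$, and the $\{a\}$-block in $\zeta^\star_p(\{c\}^m,b,\{a\}^n)$, and confirm that the resulting transfer matrix is exactly $U_q$ rather than some variant.

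Once both recurrences $\binom{F_p}{G_p}=T_p\binom{F_{p-1}}{G_{p-1}}$ and $\binom{F^\star_p}{G^\star_p}=U_p\binom{F^\star_{p-1}}{G^\star_{p-1}}$ are established, a routine induction on $p$ closes the argument: the base case $p=0$ is $F_0=F^\star_0=0$ and $G_0=G^\star_0=1$, matching $\binom{0}{1}$, and the inductive step is immediate from the recurrences, giving the telescoping products $T_pT_{p-1}\cdots T_1$ and $U_pU_{p-1}\cdots U_1$ applied to $\binom{0}{1}$. I would present the non-starred derivation in full and then indicate the parallel starred computation, emphasizing the geometric-sum step as the only substantive difference.
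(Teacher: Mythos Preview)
Your plan is correct and matches the paper's approach: induction on $p$ via the splitting on whether the top summation index equals $p$, yielding the one-step matrix recurrences and hence the product formulas. The only point where the paper is slicker is the starred case: instead of explicitly summing geometric series over runs of indices equal to $p$ (and worrying about how such a run can cross the $\{c\}$--$b$--$\{a\}$ boundaries), the paper simply observes that peeling off a single top index equal to $p$ leaves the remaining indices still bounded by $p$, giving
\[
F^\star_p=F^\star_{p-1}+\tfrac{1}{p^b}G^\star_p+\tfrac{x}{p^c}F^\star_p,\qquad
G^\star_p=G^\star_{p-1}+\tfrac{y}{p^a}G^\star_p,
\]
with $F^\star_p,G^\star_p$ on the right; moving these to the left and inverting the resulting $2\times2$ matrix produces $U_p$ directly, with no interleaving bookkeeping needed.
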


\begin{proof}
The case $p=0$ is obvious. 
For $p>0$, by definition, we have 
\begin{align*}
F_p(x,y)
&=\sum_{m,n\geq 0}\sum_{p\geq p_1>\cdots>p_{m+n+1}>0}
\frac{x^my^n}{p_1^a \cdots p_m^a p_{m+1}^b p_{m+2}^c \cdots p_{m+n+1}^c}\\
&=F_{p-1}(x,y)
+\sum_{m,n\geq 0}\sum_{p=p_1>\cdots>p_{m+n+1}>0}
\frac{x^my^n}{p_1^a \cdots p_m^a p_{m+1}^b p_{m+2}^c \cdots p_{m+n+1}^c}\\
&=F_{p-1}(x,y)
+\sum_{n\geq 0}\sum_{p=p_1>\cdots>p_{n+1}>0}
\frac{y^n}{p_1^b p_2^c \cdots p_{n+1}^c}\\
&\qquad +\sum_{m>0,n\geq 0}\sum_{p=p_1>\cdots>p_{m+n+1}>0}
\frac{x^my^n}{p_1^a \cdots p_m^a p_{m+1}^b p_{m+2}^c \cdots p_{m+n+1}^c} \\
&=F_{p-1}(x,y)+\frac{1}{p^b}G_{p-1}(y)+\frac{x}{p^a}F_{p-1}(x,y). 
\end{align*}
In a similar but simpler way, we also obtain 
$G_p(y)=G_{p-1}(y)+\frac{y}{p^c}G_{p-1}(y)$. 
Hence we have 
\[\begin{pmatrix}F_p(x,y)\\ G_p(y)\end{pmatrix}
=\begin{pmatrix}1+\frac{x}{p^a} & \frac{1}{p^b} \\ 
0 & 1+\frac{y}{p^c}\end{pmatrix}
\begin{pmatrix}F_{p-1}(x,y) \\ G_{p-1}(y)\end{pmatrix}, \]
and the identity \eqref{eq:FG} by induction. 

The case \eqref{eq:FGstar} can be shown in a similar way. 
In fact, one has 
\begin{align*}
F^\star_p(x,y)
&=F^\star_{p-1}(x,y)+\frac{1}{p^b}G^\star_p (y)+\frac{x}{p^c}F^\star_p(x,y),\\
G^\star_p(y)&=G^\star_{p-1}(y)+\frac{y}{p^a} G^\star_p(y), 
\end{align*}
that is 
\[\begin{pmatrix} 1-\frac{x}{p^c} & -\frac{1}{p^b} \\
0 & 1-\frac{y}{p^a} \end{pmatrix}
\begin{pmatrix} F^\star_p(x,y) \\ G^\star_p(y) \end{pmatrix}
=\begin{pmatrix}F^\star_{p-1} (x,y) \\ G^\star_{p-1} (y) \end{pmatrix}. \]
It is easy to see that the inverse of the $2\times 2$ matrix in the left hand side 
is equal to $U_p$. 
\end{proof}

\begin{proposition}\label{prop:ccbaa} 
For any non-negative integers $p$, $m$ and $n$, we have
\begin{equation}\label{eq:ccbaa}
\zeta^\star_p(\{c\}^m,b,\{a\}^n)
=\sum_{k=0}^m\sum_{l=0}^n(-1)^{k+l}\zeta_p(\{a\}^l,b,\{c\}^k)
\cdot\zeta^\star_p(\{c\}^{m-k})\cdot\zeta^\star_p(\{a\}^{n-l}).
\end{equation}
\end{proposition}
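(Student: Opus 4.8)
The plan is to recast the asserted identity as a single identity between the generating functions of Lemma~\ref{lem:FG}. Reading the right-hand side of \eqref{eq:ccbaa} as a coefficient in a product of three power series, I would introduce the auxiliary generating function $H^\star_p(x)=\sum_{i\ge0}\zeta^\star_p(\{c\}^i)x^i$. Then a direct Cauchy-product computation (set $m'=m-k$, $n'=n-l$) shows that \eqref{eq:ccbaa}, holding for all $m,n\ge0$, is equivalent to the single generating-function identity
\[
F^\star_p(x,y)=F_p(-y,-x)\,H^\star_p(x)\,G^\star_p(y).
\]
Here the sign $(-1)^{k+l}$ in \eqref{eq:ccbaa} is precisely what evaluating $F_p$ at $(-y,-x)$ produces, and the swap of variables matches the fact that $F_p$ records the $a$'s with its first variable and the $c$'s with its second, whereas $F^\star_p$ does the opposite.

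The heart of the argument is a matrix identity relating $U_q$ to $T_q$. Substituting $x\mapsto-y$ and $y\mapsto-x$ in the definition of $T_q$ gives $T_q(-y,-x)=\begin{pmatrix}1-\frac{y}{q^a}&\frac1{q^b}\\0&1-\frac{x}{q^c}\end{pmatrix}$, which is exactly the matrix factor appearing in $U_q$. Hence
\[
U_q(x,y)=\Bigl(1-\tfrac{x}{q^c}\Bigr)^{-1}\Bigl(1-\tfrac{y}{q^a}\Bigr)^{-1}\,T_q(-y,-x).
\]
Because the prefactor is a scalar, taking the product over $q=1,\dots,p$ yields $U_p\cdots U_1=(s_p\cdots s_1)\,T_p(-y,-x)\cdots T_1(-y,-x)$ with $s_q=\bigl((1-\frac{x}{q^c})(1-\frac{y}{q^a})\bigr)^{-1}$.

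To conclude, I would apply both sides to the vector $\begin{pmatrix}0\\1\end{pmatrix}$ and invoke Lemma~\ref{lem:FG}. By \eqref{eq:FGstar} the left side equals $\begin{pmatrix}F^\star_p(x,y)\\G^\star_p(y)\end{pmatrix}$, while by \eqref{eq:FG} with $x,y$ replaced by $-y,-x$ the right side equals $(s_p\cdots s_1)\begin{pmatrix}F_p(-y,-x)\\G_p(-x)\end{pmatrix}$. Comparing second components gives $G^\star_p(y)=(s_p\cdots s_1)\,G_p(-x)$, so $s_p\cdots s_1=G^\star_p(y)/G_p(-x)$; since $G_p(-x)=\prod_{q=1}^p(1-\frac{x}{q^c})$ and the elementary identity $\sum_{i\ge0}\zeta^\star_p(\{c\}^i)x^i=\prod_{q=1}^p(1-\frac{x}{q^c})^{-1}$ holds, this prefactor is exactly $H^\star_p(x)\,G^\star_p(y)$. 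Substituting into the first component produces the target generating-function identity, and extracting the coefficient of $x^m y^n$ recovers \eqref{eq:ccbaa}.

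I expect the only genuine obstacle to be spotting the matrix relation $U_q(x,y)=s_q\,T_q(-y,-x)$ and then keeping the substitution $x\mapsto-y$, $y\mapsto-x$ consistent throughout — in particular, verifying that the interleaving of the three power series yields precisely the double sum with the sign $(-1)^{k+l}$ rather than a permuted or mis-signed variant. Once that clean identity is in hand, the remaining steps (multiplying the matrices, reading off the two components, and comparing coefficients) are entirely routine.
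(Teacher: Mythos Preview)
Your proposal is correct and follows essentially the same route as the paper: the key identity $F^\star_p(x,y)=F_p(-y,-x)\prod_{q=1}^p(1-x/q^c)^{-1}\prod_{q=1}^p(1-y/q^a)^{-1}$ is exactly what the paper derives from Lemma~\ref{lem:FG}, and both arguments finish by identifying the two product factors as $\sum_{m\ge0}\zeta^\star_p(\{c\}^m)x^m$ and $\sum_{n\ge0}\zeta^\star_p(\{a\}^n)y^n$ and comparing coefficients. Your version merely unpacks the matrix relation $U_q(x,y)=s_q\,T_q(-y,-x)$ more explicitly than the paper does (and the detour through the second component to identify $s_p\cdots s_1$ is unnecessary, since that scalar is already given in closed form).
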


\begin{proof}
Lemma \ref{lem:FG} implies that 
\begin{equation}\label{eq:Fstar,F} 
F^\star_p(x,y)=F_p(-y,-x)\prod_{q=1}^p\biggl(1-\frac{x}{q^c}\biggr)^{-1}
\prod_{q=1}^p \biggl(1-\frac{y}{q^a}\biggr)^{-1}.
\end{equation}
Here we have
\[\prod_{q=1}^p\biggl(1-\frac{x}{q^c}\biggr)^{-1}
=\sum_{m\geq 0}\zeta^\star_p(\{c\}^m)x^m, \quad 
\prod_{q=1}^p \biggl(1-\frac{y}{q^a}\biggr)^{-1}
=\sum_{n\geq 0}\zeta^\star_p(\{a\}^n)y^n. \]
By comparing the coefficients of \eqref{eq:Fstar,F}, 
we obtain \eqref{eq:ccbaa}.
\end{proof}

\begin{remark}
Here we make a remark on an algebraic interpretation of 
the identity \eqref{eq:ccbaa}. 

First we recall the setup of harmonic algebra (see \cite{H} for details). 
Let $\h=\Q\langle x,y \rangle$ be the non-commutative polynomial algebra 
in two indeterminates $x,y$, and $\h^1$ its subalgebra $\Q+\h y$. 
For an integer $p\geq 0$, we define the $\Q$-linear maps 
$Z_p\colon\h^1\longrightarrow\Q$ and $Z_p^\star\colon\h^1\longrightarrow\Q$ by 
\begin{alignat*}{2}
Z_p(1)&=1 \text{\ and \ } 
&Z_p(z_{k_1}\cdots z_{k_n})&=\zeta_p(k_1,\ldots,k_n), \\
Z_p^\star(1)&=1 \text{\ and \ } 
&Z_p^\star (z_{k_1}\cdots z_{k_n})&=\zeta^\star_p(k_1,\ldots,k_n), 
\end{alignat*}
where $z_k=x^{k-1}y$  $(k=1,2,\ldots)$. 
Let $\gamma$ be the algebra automorphism on $\h$ characterized by 
$\gamma(x)=x$ and $\gamma(y)=x+y$, 
and define the $\Q$-linear transformation $d\colon\h^1\longrightarrow\h^1$ by 
\[d(1)=1 \text{ and } d(wy)=\gamma(w)y\] 
for any word $w\in\h$. Then one has $Z_p\circ d=Z_p^\star$. 
Moreover, we define the harmonic product $*$, 
a $\Q$-bilinear product on $\h^1$, inductively by 
\begin{gather*}
1*w=w*1=w,\\
z_kw*z_lw'=z_k(w*z_lw')+z_l(z_kw*w')+z_{k+l}(w*w'),
\end{gather*}
where $k,l\geq 1$ and $w,w'\in \h^1$. 
This product $*$ makes $\h^1$ a commutative $\Q$-algebra, 
and $Z_p\colon\h^1\longrightarrow\Q$ an algebra homomorphism 
for any $p\geq 0$. 

Now the algebraic counterpart to the identity \eqref{eq:ccbaa} is 
the following: 

\begin{proposition}\label{prop:ccbaa alg}
For any $m,n\geq 0$, we have 
\begin{equation}\label{eq:ccbaa alg}
d(z_c^mz_bz_a^n)=\sum_{k=0}^m\sum_{l=0}^n
(-1)^{k+l}z_a^lz_bz_c^k*d(z_c^{m-k})*d(z_a^{n-l}). 
\end{equation}
\end{proposition}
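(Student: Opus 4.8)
The plan is to deduce the algebraic identity \eqref{eq:ccbaa alg} from its analytic counterpart \eqref{eq:ccbaa} by exploiting the two structural facts recorded just above the statement: that each $Z_p$ is a homomorphism for the harmonic product $*$, and that $Z_p\circ d=Z_p^\star$. Concretely, I would apply $Z_p$ to the element
\[
w_{m,n}:=d(z_c^mz_bz_a^n)-\sum_{k=0}^m\sum_{l=0}^n(-1)^{k+l}\,z_a^lz_bz_c^k*d(z_c^{m-k})*d(z_a^{n-l})\in\h^1
\]
for every $p\ge0$. The first term gives $Z_p(d(z_c^mz_bz_a^n))=Z_p^\star(z_c^mz_bz_a^n)=\zeta^\star_p(\{c\}^m,b,\{a\}^n)$. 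In each summand of the second term the homomorphism property factorizes $Z_p$ over the three $*$-factors, and then $Z_p(z_a^lz_bz_c^k)=\zeta_p(\{a\}^l,b,\{c\}^k)$, while $Z_p(d(z_c^{m-k}))=\zeta^\star_p(\{c\}^{m-k})$ and $Z_p(d(z_a^{n-l}))=\zeta^\star_p(\{a\}^{n-l})$. Thus $Z_p(w_{m,n})$ is precisely the difference of the two sides of \eqref{eq:ccbaa}, which vanishes by Proposition \ref{prop:ccbaa}. Hence $w_{m,n}\in\bigcap_{p\ge0}\ker Z_p$.

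It therefore remains to prove the separation statement that $\bigcap_{p\ge0}\ker Z_p=0$ in $\h^1$; equivalently, that the map $\h^1\to\prod_{p\ge0}\Q,\ w\mapsto(Z_p(w))_p$, is injective. Since $w_{m,n}$ is homogeneous of weight $cm+an+b$ (both $*$ and $d$ preserve weight), it suffices to establish injectivity on each finite-dimensional weight-graded piece $\h^1_N$. For this I would run an induction on $N$ using the first-difference operator: from the definition of the truncated sums one has, for any word $z_{k_1}\cdots z_{k_r}$,
\[
(Z_p-Z_{p-1})(z_{k_1}z_{k_2}\cdots z_{k_r})=\frac{1}{p^{k_1}}\,Z_{p-1}(z_{k_2}\cdots z_{k_r}).
\]
Writing $w=\sum_{j\ge1}z_j w_j$ with $w_j\in\h^1_{N-j}$ according to the first letter, the hypothesis $Z_p(w)=0$ for all $p$ yields $\sum_{j}p^{-j}Z_{p-1}(w_j)=0$ for every $p\ge1$. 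Separating the distinct powers $p^{-j}$ would give $Z_q(w_j)=0$ for all $q$ and all $j$, whence $w_j=0$ by the inductive hypothesis, i.e.\ $w=0$.

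The main obstacle is exactly this last separation step: the coefficients $Z_{p-1}(w_j)$ are themselves functions of $p$, so one cannot naively read off the powers of $p$. The way through is to use that each truncated value $Z_q(w_j)$ grows at most like a power of $\log q$ as $q\to\infty$, so the terms $p^{-j}Z_{p-1}(w_j)$ have genuinely distinct orders of magnitude $p^{-j}(\log p)^{O(1)}$; a relation holding for all large $p$ then forces the top-order (smallest $j$) contribution to vanish, and iterating gives $Z_q(w_j)=0$ for every $q$. This faithfulness of the family $\{Z_p\}_{p\ge0}$ is the standard input, being the finite/regularized counterpart of the quasi-shuffle theory, and once it is in place the identity \eqref{eq:ccbaa alg} follows at once. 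An alternative that sidesteps faithfulness would be to prove \eqref{eq:ccbaa alg} internally in $\h^1[[x,y]]$ by lifting the transfer-matrix computation of Lemma \ref{lem:FG} to a $*$-generating-function identity, but this requires re-deriving the matrix recursion at the level of $\h^1$ and appears less clean.
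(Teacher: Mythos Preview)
Your approach is exactly the paper's: both deduce \eqref{eq:ccbaa alg} from the numerical identity \eqref{eq:ccbaa} via the injectivity of $w\mapsto\bigl(Z_p(w)\bigr)_p$, which the paper simply cites from \cite[\S3]{Y} (and, like you, notes that a direct algebraic proof also exists). Your sketch of the injectivity argument correctly isolates the one nontrivial step, though as written the asymptotic separation only gives $Z_{p}(w_{j_0})\to 0$ rather than $Z_q(w_{j_0})=0$ for every $q$; since you ultimately invoke the faithfulness as a known input, this matches the paper's treatment.
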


In fact, this proposition is equivalent to the identity \eqref{eq:ccbaa}, 
since the map from $\h^1$ to $\Q^\N$ that sends $w$ 
to $\bigl\{Z_p(w)\bigr\}_p$ is injective (see \cite[\S 3]{Y} for details). 
Alternatively, one can also prove \eqref{eq:ccbaa alg} directly 
(this last remark was communicated by Shingo Saito). 
\end{remark}

Now we return to the proof of the identity \eqref{eq:main2} of 
Theorem \ref{thm:main}. 

\begin{proof}[Proof of \eqref{eq:main2}]
By Proposition \ref{prop:ccbaa}, we have for any integers $m,n>0$ 
\begin{align*}
\zeta^\star_p(\{2\}^m,1,\{2\}^n)
&=\sum_{k=0}^m\sum_{l=0}^n(-1)^{k+l}\zeta_p(\{2\}^l,1,\{2\}^k)\,
\zeta^\star_p(\{2\}^{m-k})\,\zeta^\star_p(\{2\}^{n-l}), \\
\zeta^\star_p(\{2\}^{n-1},3,\{2\}^m)
&=\sum_{k=0}^m\sum_{l=0}^{n-1}(-1)^{k+l}\zeta_p(\{2\}^k,3,\{2\}^l)\,
\zeta^\star_p(\{2\}^{m-k})\,\zeta^\star_p(\{2\}^{n-1-l})\\
&=\sum_{k=0}^m\sum_{l=1}^n(-1)^{k+l+1}\zeta_p(\{2\}^k,3,\{2\}^{l-1})\,
\zeta_p^\star(\{2\}^{m-k})\,\zeta^\star_p(\{2\}^{n-l}). 
\end{align*}
Hence, we have
\begin{equation}\label{eq:22122+2322} 
\begin{split}
&\zeta^\star_p(\{2\}^m,1,\{2\}^n)+\zeta^\star_p(\{2\}^{n-1},3,\{2\}^m)\\
&=\sum_{k=0}^m(-1)^k\zeta_p(1,\{2\}^k)\,
\zeta^\star_p(\{2\}^{m-k})\,\zeta^\star_p(\{2\}^n)\\
&\quad +\sum_{k=0}^m\sum_{l=1}^n(-1)^{k+l}
\Bigl\{\zeta_p(\{2\}^l,1,\{2\}^k)-\zeta_p(\{2\}^k,3,\{2\}^{l-1})\Bigr\}
\zeta^\star_p(\{2\}^{m-k})\,\zeta^\star_p(\{2\}^{n-l}).
\end{split}
\end{equation}
By the duality \eqref{eq:duality},  
the second term in the right hand side of \eqref{eq:22122+2322} 
vanishes when $p\to\infty$. 
Finally, using the identity 
\[\zeta^\star_p(\{2\}^m,1)
=\sum_{k=0}^m(-1)^k\zeta_p(1,\{2\}^k)\,\zeta^\star_p(\{2\}^{m-k}), \]
which is a special case of Proposition \ref{prop:ccbaa}, 
we prove the identity \eqref{eq:main2} by letting $p\to\infty$.
\end{proof}

\begin{remark}
Although both sides of the identity \eqref{eq:main2} diverge when $m=0$, 
the above proof shows that for any non-negative integer $p$ we have 
\begin{equation}\label{eq:main2 m=0}
\begin{split}
&\zeta^\star_p(1,\{2\}^n)+\zeta^\star_p(\{2\}^{n-1},3)\\
&=\zeta^\star_p(1)\zeta^\star_p(\{2\}^n)+\sum_{l=1}^n(-1)^l
\Bigl\{\zeta_p (\{2\}^l,1)-\zeta_p (3,\{2\}^{l-1})\Bigr\}
\zeta^\star_p(\{2\}^{n-l}).
\end{split}
\end{equation}
On the other hand, the harmonic product relation implies that 
\[\zeta^\star_p(1)\zeta^\star_p(\{2\}^n)
=\sum_{l=0}^n\zeta^\star_p(\{2\}^l,1,\{2\}^{n-l})
-\sum_{l=0}^{n-1}\zeta^\star_p(\{2\}^l,3,\{2\}^{n-1-l}). \]
Substituting it into \eqref{eq:main2 m=0}, we obtain 
\begin{align*} 
\zeta^\star_p(\{2\}^{n-1},3)
&=\sum_{l=1}^n\zeta^\star_p(\{2\}^l,1,\{2\}^{n-l})
-\sum_{l=0}^{n-1}\zeta^\star_p(\{2\}^l,3,\{2\}^{n-1-l})\\
&\quad +\sum_{l=1}^n(-1)^l
\Bigl\{\zeta_p(\{2\}^l,1)-\zeta_p(3,\{2\}^{l-1})\Bigr\}
\zeta^\star_p(\{2\}^{n-l}).
\end{align*}
By letting $p\rightarrow \infty$, we get the following:

\begin{proposition}\label{prop:m=0} 
For any positive integer $n$, we have
\[\zeta^\star(\{2\}^{n-1},3)
=\sum_{l=1}^n\zeta^\star(\{2\}^l,1,\{2\}^{n-l})
-\sum_{l=0}^{n-1}\zeta^\star(\{2\}^l,3,\{2\}^{n-1-l}).\]
\end{proposition}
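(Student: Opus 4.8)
The plan is to prove the identity first at the level of the truncated values $\zeta_p$ and $\zeta^\star_p$, where every sum is finite and no convergence question arises, and to pass to the limit $p\to\infty$ only at the very end, using the duality \eqref{eq:duality} to annihilate the resulting error terms. The crucial point is that the argument establishing \eqref{eq:main2} in fact produces, before any limit is taken, a finite-$p$ identity that remains meaningful in the degenerate case $m=0$, so I can simply specialize it.

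First I would set $m=0$ in the computation leading to \eqref{eq:22122+2322}. This yields the truncated identity \eqref{eq:main2 m=0},
\[
\zeta^\star_p(1,\{2\}^n)+\zeta^\star_p(\{2\}^{n-1},3)
=\zeta^\star_p(1)\,\zeta^\star_p(\{2\}^n)
+\sum_{l=1}^n(-1)^l\Bigl\{\zeta_p(\{2\}^l,1)-\zeta_p(3,\{2\}^{l-1})\Bigr\}
\zeta^\star_p(\{2\}^{n-l}),
\]
valid for every $p\geq 0$; here the single factor $\zeta^\star_p(1)\,\zeta^\star_p(\{2\}^n)$ takes over the role played by the product on the left of \eqref{eq:main2} when $m>0$.

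Next I would expand $\zeta^\star_p(1)\,\zeta^\star_p(\{2\}^n)$ by the harmonic product for truncated star values. Multiplying the two defining partial sums, which share their single summation variable, I collect terms according to where the extra index $1$ falls: inserting it into one of the gaps between the $2$'s produces the star values $\zeta^\star_p(\{2\}^l,1,\{2\}^{n-l})$, while the coincidence cases, in which the weight-$1$ variable meets a weight-$2$ variable, contribute a depth-lowering $3$ and must be subtracted once. This gives
\[
\zeta^\star_p(1)\,\zeta^\star_p(\{2\}^n)
=\sum_{l=0}^n\zeta^\star_p(\{2\}^l,1,\{2\}^{n-l})
-\sum_{l=0}^{n-1}\zeta^\star_p(\{2\}^l,3,\{2\}^{n-1-l}).
\]
Substituting this into the identity above, the term $\zeta^\star_p(1,\{2\}^n)$ on the left cancels the $l=0$ summand $\zeta^\star_p(1,\{2\}^n)$ on the right, leaving a clean finite-$p$ formula for $\zeta^\star_p(\{2\}^{n-1},3)$ with the error sum as the only remaining correction.

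Finally I would let $p\to\infty$. Each factor $\zeta^\star_p(\{2\}^{n-l})$ converges, while every bracket $\zeta_p(\{2\}^l,1)-\zeta_p(3,\{2\}^{l-1})$ tends to $\zeta(\{2\}^l,1)-\zeta(3,\{2\}^{l-1})$, which vanishes by the case $(m,n)=(l,0)$ of the duality \eqref{eq:duality}; hence the whole error sum disappears termwise in the limit and the asserted identity emerges. I expect the only genuinely delicate step to be the harmonic-product expansion: one must get the exact shape of the product right, in particular the appearance of the $3$-terms with the correct sign and index ranges. The specialization to $m=0$ and the closing passage to the limit are then routine bookkeeping.
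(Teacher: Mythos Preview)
Your proposal is correct and follows essentially the same route as the paper: specialize the finite-$p$ identity \eqref{eq:22122+2322} to $m=0$ to obtain \eqref{eq:main2 m=0}, expand $\zeta^\star_p(1)\,\zeta^\star_p(\{2\}^n)$ by the harmonic (stuffle) product for truncated star values, cancel the $l=0$ term, and let $p\to\infty$ using the duality \eqref{eq:duality} to kill the error sum. The only cosmetic difference is that the paper records the intermediate finite-$p$ formula for $\zeta^\star_p(\{2\}^{n-1},3)$ explicitly before taking the limit, whereas you describe it in words.
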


This proposition was shown by Ihara, Kajikawa, Ohno and Okuda 
\cite[eq.~(12)]{IKOO} using the derivation relation for MZSVs. 
\end{remark}

\section{On the identities \eqref{eq:main1} and \eqref{eq:main3}}
In this section, we prove a formula which includes 
the identities \eqref{eq:main1} and \eqref{eq:main3} as special cases. 

First we introduce some notation. 
For an integer $n\geq 1$, 
consider two vectors $\vecj=(j_1,\ldots,j_n)\in\Z_{\geq 0}^n$ and 
$\vece=(e_1,\ldots,e_{n-1})\in\{1,3\}^{n-1}$ 
(here $\{1,3\}^{n-1}$ does not mean the sequence $(1,3,\ldots,1,3)$, 
but the $(n-1)$-th power of the set $\{1,3\}$). 
We set 
\begin{equation}\label{eq:Zmap}
\Zmap_n(\vecj,\vece):=
\zeta^\star\bigl(\{2\}^{j_1},e_1,\{2\}^{j_2},e_2,\ldots,
e_{n-1},\{2\}^{j_n}\bigr). 
\end{equation}
For $n=0$, we simply put $\Zmap_0=1$. 
Note that the right hand side of \eqref{eq:Zmap} diverges if and only if 
$n\geq 2$, $j_1=0$ and $e_1=1$. 
If this is not the case, we say that $(\vecj,\vece)$ is an admissible pair. 

For $\vecj=(j_1,\ldots,j_n)$, we define the following operations: 
\begin{align*}
\vecj_+&:=(j_1,\ldots,j_n,0),& \vecj^+&:=(j_1,\ldots,j_{n-1},j_n+1), \\
\vecj'&:=(j_n,\ldots,j_1),& \vecj|_k&:=(j_1,\ldots,j_k) 
\quad (k=0,\ldots,n). 
\end{align*}
For example, we have 
$(\vecj'|_{n-k})_+=(j_n,\ldots,j_{k+1},0)$. 
We also apply similar operations to $\vece$ in obvious manners. 

Our main result in this section is the following formula: 

\begin{theorem}\label{thm:sec3main}
Let $n$ be a positive integer, 
$\vecj=(j_1,\ldots,j_n)\in\Z_{\geq 0}^n$ and 
$\vece=(e_1,\ldots,e_{n-1})\in\{1,3\}^{n-1}$. 
Assume that both $(\vecj,\vece)$ and $(\vecj',\vece')$ are admissible pairs. 
Put $e_0=e_n=1$, and 
\[X(k):=\begin{cases}
\Zmap_{k+1}((\vecj|_k)_+,\vece|_k)\cdot
\Zmap_{n-k+1}((\vecj'|_{n-k})_+,\vece'|_{n-k}) 
& (\text{if $e_k=1$}), \\
\Zmap_k((\vecj|_k)^+,\vece|_{k-1})\cdot
\Zmap_{n-k}((\vecj'|_{n-k})^+,\vece'|_{n-k-1}) 
& (\text{if $e_k=3$}) 
\end{cases}\] 
for $k=0,\ldots,n$ 
(here $\vece|_n$ and $\vece'|_n$ stand for 
$(e_1,\ldots,e_n)$ and $(e_{n-1},\ldots,e_0)$, respectively). 
Then we have 
\[\sum_{k=0}^n(-1)^k X(k)=0. \]
\end{theorem}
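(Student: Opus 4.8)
The plan is to imitate the truncation-and-transfer-matrix strategy of Section 2. First I would pass to the finite sums $\zeta^\star_p$ and consider, for each cut position $k$, the truncated factors appearing in $X(k)$, writing $X_p(k)$ for the product of the corresponding $\zeta^\star_p$-values. Since the admissibility hypotheses on $(\vecj,\vece)$ and $(\vecj',\vece')$ guarantee that each factor converges to a genuine MZSV as $p\to\infty$, it suffices to analyze $\sum_{k=0}^n(-1)^kX_p(k)$ and control its limit.

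Next, as in Lemma \ref{lem:FG}, I would build each truncated factor as a matrix entry of a product of transfer matrices indexed by the truncation parameter $q=1,\dots,p$. Reading the index word $\{2\}^{j_1},e_1,\dots,e_{n-1},\{2\}^{j_n}$ from left to right, each ``$2$'' contributes a factor of the type used in Section 2, while each $e_i\in\{1,3\}$ and each block boundary contributes a connector matrix. The two boundary operations $\vecj\mapsto\vecj_+$ and $\vecj\mapsto\vecj^+$ (equivalently the two cases $e_k=1$ and $e_k=3$) should correspond to two different ``caps'' applied to the same partial product, and the reversed index $(\vecj',\vece')$ in the right-hand factor of $X(k)$ should be produced by the reversal symmetry of these matrix products together with the star-to-non-star conversion of \eqref{eq:Fstar,F}, that is, the negation of the variables.

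With this encoding, I expect $\sum_{k=0}^n(-1)^kX_p(k)$ to collapse: the alternating sign should be exactly the sign carried by the variable negation in \eqref{eq:Fstar,F}, so that the sum can be read off as a single entry of a product of a transfer matrix with its reversed inverse, hence of the identity, leaving only a boundary contribution. Concretely I would try to exhibit a telescoping $(-1)^kX_p(k)=D_p(k)-D_p(k+1)$ for a suitable quantity $D_p$, reducing the whole sum to $D_p(0)-D_p(n+1)$. Already the small case $\vecj=(1,1)$, $\vece=(\,)$, where $2\zeta^\star_p(2,1,2,1)-\zeta^\star_p(2,1)^2\neq0$ for small $p$, shows that the finite-$p$ sum need not vanish exactly; so the remaining step is to identify $D_p(0)-D_p(n+1)$ with finite-$p$ error terms built from non-star truncated values and to show, using the MZV duality \eqref{eq:duality} exactly as in the proof of \eqref{eq:main2}, that these tend to $0$ as $p\to\infty$.

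The main obstacle I anticipate is the uniform bookkeeping across the two cases $e_k\in\{1,3\}$: arranging the connector matrices so that the ``append a $1$'' cap ($\vecj\mapsto\vecj_+$) and the ``absorb a $2$'' cap ($\vecj\mapsto\vecj^+$) both emerge from a single reversal/inverse relation, and checking that this relation is compatible with an arbitrary mixed word $\vece\in\{1,3\}^{n-1}$ rather than only the alternating patterns of \eqref{eq:1ext} and \eqref{eq:3ext}. A secondary difficulty is purely analytic: isolating the correct finite-$p$ correction and invoking \eqref{eq:duality} in sufficient generality so that every leftover term vanishes in the limit.
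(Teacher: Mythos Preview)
Your plan transplants the Section~2 machinery wholesale, but that machinery is tailored to a \emph{single} interior letter: the $2\times2$ recursion of Lemma~\ref{lem:FG} encodes exactly one generating variable for the left block, one for the right block, and one fixed middle letter~$b$. For a word $\{2\}^{j_1}e_1\{2\}^{j_2}\cdots e_{n-1}\{2\}^{j_n}$ with $n-1$ interior letters you would need an $n$-variable generating function and an $n\times n$ (or iterated $2\times2$) product, and the reversal/inversion trick \eqref{eq:Fstar,F} no longer produces the clean star-vs-non-star swap that made Section~2 work. You flag this as ``bookkeeping'', but it is the actual content: nothing in your sketch explains how the cap for $e_k=1$ and the cap for $e_k=3$ arise from one matrix identity. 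The second gap is your plan to kill the leftover finite-$p$ terms with the MZV duality \eqref{eq:duality}. That duality is the single relation $\zeta(\{2\}^m,1,\{2\}^n)=\zeta(\{2\}^n,3,\{2\}^{m-1})$; for a general $\vece\in\{1,3\}^{n-1}$ the error terms you would produce are differences of non-star MZVs with several interior $1$'s and $3$'s, and there is no off-the-shelf duality that annihilates them.

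The paper's proof avoids both problems by working directly with the convergent series---no truncation, no transfer matrices, no appeal to \eqref{eq:duality}. Writing each $\zeta^\star$-factor of $X(k)$ as a nested sum with blocks $C_j(A,B)=\sum_{A\ge a_1\ge\cdots\ge a_j\ge B}a_1^{-2}\cdots a_j^{-2}$, one inserts the elementary partial-fraction identity
\[
\frac{p^{e-1}q}{p+q}+\frac{p\,q^{e-1}}{p+q}=
\begin{cases}1 & (e=1),\\ pq & (e=3),\end{cases}
\]
at the cut position to split $X(k)=E(k)+F(k)$. The crucial step is then an exact ``swap'' lemma,
\[
\sum_{p_0=1}^{p}C_j(p,p_0)\,\frac{q}{p_0(p_0+q)}
=\sum_{q_0=1}^{q}C_j(q,q_0)\,\frac{p}{q_0(q_0+p)},
\]
proved by a short induction on $j$; it gives $E(k)=F(k-1)$ on the nose, so $\sum_k(-1)^kX(k)$ telescopes to $E(0)+(-1)^nF(n)=0$ exactly. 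In particular your test case showing $\sum_k(-1)^kX_p(k)\ne0$ for small $p$ is a symptom that truncating at a common level $p$ is the wrong move here: the correct decomposition introduces the coupling $\tfrac{1}{p+q}$ between the two factors, and once you have it the identity is exact rather than asymptotic.
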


Let us examine some examples. 
First, we set $\vece=(1,1,\ldots,1)$. Then we have 
\begin{align*}
X(k)
&=\Zmap_{k+1}\bigl((j_1,\ldots,j_k,0),(1,\ldots,1)\bigr)\cdot
\Zmap_{n-k+1}\bigl((j_n,\ldots,j_{k+1},0),(1,\ldots,1)\bigr)\\
&=\zeta^\star(\{2\}^{j_1},1,\ldots,\{2\}^{j_k},1)\cdot
\zeta^\star(\{2\}^{j_n},1,\ldots,\{2\}^{j_{k+1}},1). 
\end{align*}
Hence Theorem \ref{thm:sec3main} implies the identity \eqref{eq:1ext} 
in Theorem \ref{thm:1,3ext} in this case. Similarly, 
for $\vece=(3,1,3,\ldots,1,3)\in\{1,3\}^{2n-1}$, we have 
\begin{align*}
X(k)=
&\zeta^\star(\{2\}^{j_1},3,\{2\}^{j_2},1,\ldots,3,\{2\}^{j_k},1)\\
&\times
\zeta^\star(\{2\}^{j_{2n}},3,\{2\}^{j_{2n-1}},1,\ldots,3,\{2\}^{j_{k+1}},1)
\end{align*}
if $k$ is even, and 
\begin{align*}
X(k)=
&\zeta^\star(\{2\}^{j_1},3,\{2\}^{j_2},1,\ldots,
3,\{2\}^{j_{k-1}},1,\{2\}^{j_k+1})\\
&\times
\zeta^\star(\{2\}^{j_{2n}},3,\{2\}^{j_{2n-1}},1,\ldots,
3,\{2\}^{j_{k+2}},1,\{2\}^{j_{k+1}+1})
\end{align*}
if $k$ is odd. 
Therefore, we obtain the identity \eqref{eq:3ext} in Theorem \ref{thm:1,3ext}.

We may also apply Theorem \ref{thm:sec3main} to other cases. 
For example, by setting $\vece=(3,3)$ or $\vece=(3,1)$, one obtains 
\begin{align*}
\zeta^\star&(\{2\}^{j_1},3,\{2\}^{j_2},3,\{2\}^{j_3},1)
+\zeta^\star(\{2\}^{j_1+1})\cdot\zeta^\star(\{2\}^{j_3},3,\{2\}^{j_2+1})\\
&=\zeta^\star(\{2\}^{j_1},3,\{2\}^{j_2+1})\cdot\zeta^\star(\{2\}^{j_3+1})
+\zeta^\star(\{2\}^{j_3},3,\{2\}^{j_2},3,\{2\}^{j_1},1) 
\end{align*}
or 
\begin{align*}
\zeta^\star&(\{2\}^{j_1},3,\{2\}^{j_2},1,\{2\}^{j_3},1)
+\zeta^\star(\{2\}^{j_1+1})\cdot\zeta^\star(\{2\}^{j_3},1,\{2\}^{j_2+1})\\
&=\zeta^\star(\{2\}^{j_1},3,\{2\}^{j_2},1)\cdot\zeta^\star(\{2\}^{j_3},1)
+\zeta^\star(\{2\}^{j_3},1,\{2\}^{j_2},3,\{2\}^{j_1},1), 
\end{align*}
respectively. 

\bigskip
Now we proceed to the proof of Theorem \ref{thm:sec3main}. 
For $\infty\geq A\geq B\geq 1$, we put 
\begin{gather*}
C_{-1}(A,B)=\delta_{A,B}A^2,\qquad
C_0(A,B)=1,\\
C_j(A,B)=\sum_{A\geq a_1\geq\cdots\geq a_j\geq B}
\frac{1}{a_1^2\cdots a_j^2}\qquad (j=1,2,3,\ldots). 
\end{gather*}
Then we have 
\begin{equation}\label{eq:C_induction}
C_j(A,B)=\sum_{p=B}^A\frac{1}{p^2}C_{j-1}(p,B)
=\sum_{p=B}^AC_{j-1}(A,p)\frac{1}{p^2}
\end{equation}
for any $j\geq 0$. 

\begin{lemma}\label{lem:C_duality}
For any $j\geq -1$ and $1\leq p,q\leq \infty$, we have 
\begin{equation}\label{eq:C_duality}
\sum_{p_0=1}^pC_j(p,p_0)\frac{q}{p_0(p_0+q)}
=\sum_{q_0=1}^qC_j(q,q_0)\frac{p}{q_0(q_0+p)}. 
\end{equation}
Here, $\frac{p}{q_0(q_0+p)}$ means $\frac{1}{q_0}$ if $p=\infty$, 
and so on. 
\end{lemma}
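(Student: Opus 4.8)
The plan is to exhibit a single symmetric quantity that both sides of \eqref{eq:C_duality} compute, by splitting the tail factor into a symmetric ``connector'' between the two running bottom-indices. The starting observation is the partial-fraction telescoping
\[
\frac{q}{p_0(p_0+q)}=\frac{1}{p_0}-\frac{1}{p_0+q}
=\sum_{q_0=1}^{q}\left(\frac{1}{p_0+q_0-1}-\frac{1}{p_0+q_0}\right)
=\sum_{q_0=1}^{q}c(p_0,q_0),
\]
where I set $c(m,n):=\frac{1}{(m+n-1)(m+n)}$, a kernel that is manifestly symmetric in $m,n$ and obeys the summation rule $\sum_{n=1}^{t}c(m,n)=\frac{t}{m(m+t)}$ for every $t$ (with the evident limit $\sum_{n\ge1}c(m,n)=1/m$ when $t=\infty$). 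Substituting this into \eqref{eq:C_duality} rewrites the left-hand side as $\sum_{m\le p}\sum_{n\le q}C_j(p,m)\,c(m,n)$ and the right-hand side as $\sum_{m\le p}\sum_{n\le q}C_j(q,n)\,c(m,n)$, so the asserted equality becomes the statement that one may transfer the whole chain $C_j$ from the $p$-variable to the $q$-variable across the symmetric connector $c$.

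To interpolate between these two expressions I would fix $j\ge 0$ and, for $0\le i\le j$, define
\[
G_i(p,q):=\sum_{m=1}^{p}\sum_{n=1}^{q}C_{j-i}(p,m)\,C_i(q,n)\,c(m,n),
\]
so that $G_0(p,q)$ is the left-hand side and $G_j(p,q)$ is the right-hand side (using $C_0\equiv 1$ and the symmetry of $c$). The goal then reduces to the transport relation $G_i=G_{i-1}$ for each $1\le i\le j$, which moves one factor from the $q$-chain to the $p$-chain without changing the value.

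The transport step is where \eqref{eq:C_induction} does the work. In $G_i$ I would peel off the smallest entry of the $q$-chain, writing $C_i(q,n)=\sum_{t=n}^{q}C_{i-1}(q,t)\,t^{-2}$, interchange the order of summation, and apply $\sum_{n=1}^{t}c(m,n)=t/(m(m+t))$; this collapses $G_i$ to
\[
\sum_{m=1}^{p}\sum_{t=1}^{q}C_{j-i}(p,m)\,C_{i-1}(q,t)\,\frac{1}{m\,t\,(m+t)}.
\]
Running the mirror-image manipulation on $G_{i-1}$ — peeling the smallest entry of the $p$-chain via $C_{j-i+1}(p,m)=\sum_{s=m}^{p}C_{j-i}(p,s)\,s^{-2}$ and using $\sum_{m=1}^{s}c(m,n)=s/(n(n+s))$ — yields the identical double sum. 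Hence $G_i=G_{i-1}$, and chaining these gives $G_0=G_j$, which is \eqref{eq:C_duality}. The case $j=-1$ (where $C_{-1}(A,B)=\delta_{A,B}A^2$ forces both sides to equal $pq/(p+q)$ at once) is checked directly, and the values $p,q=\infty$ are handled by passing to the limit, the relevant series converging since $C_j(\infty,m)$ is a tail of $\zeta(\{2\}^j)$.

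The decisive part is the very first one: recognizing that $q/(p_0(p_0+q))$ should be broken into the \emph{symmetric} connector $c(m,n)=1/((m+n-1)(m+n))$. A naive induction on $j$ using \eqref{eq:C_induction} alone does not close, since peeling one step from a single chain only reproduces a statement of exactly the same shape and never exploits the $p\leftrightarrow q$ symmetry in which the whole content of the lemma resides. Once the symmetric connector and its summation rule are in place, the transport relation is a short and essentially forced computation, and I expect no further obstacle beyond routine bookkeeping (the ordering conventions in \eqref{eq:C_induction} and the $\infty$ conventions).
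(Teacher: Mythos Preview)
Your proof is correct. The mechanism is the same as the paper's: both rest on the telescoping identity
\[
\sum_{p_0=1}^{p_1}\frac{q}{p_0(p_0+q)}=\sum_{q_1=1}^{q}\frac{p_1}{q_1(q_1+p_1)},
\]
which is exactly your summation rule $\sum_{n=1}^{t}c(m,n)=t/(m(m+t))$ read in both directions. The difference is organizational. The paper \emph{does} close the argument by induction on $j$ (contrary to your remark that induction would not close): it peels one factor from the $p$-chain via \eqref{eq:C_induction}, applies the switching identity above, and then invokes the induction hypothesis with the running variable $q_1$ in place of $q$, reassembling the $q$-chain at the end. Your interpolating sequence $G_0=G_1=\cdots=G_j$ unrolls that same recursion, and your introduction of the symmetric connector $c(m,n)=1/((m{+}n{-}1)(m{+}n))$ together with the common form $\sum_{m,t}C_{j-i}(p,m)\,C_{i-1}(q,t)\,\tfrac{1}{mt(m+t)}$ makes the $p\leftrightarrow q$ symmetry visible at each step rather than hidden in the induction hypothesis. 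Both arguments are the same computation; yours is packaged to expose the symmetry, the paper's to minimize notation.
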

\begin{proof}
We use induction on $j$. 
When $j=-1$, both sides are equal to $\frac{pq}{p+q}$. 
Thus we assume $j\geq 0$. By \eqref{eq:C_induction}, 
we can rewrite the left hand side of \eqref{eq:C_duality} as 
\begin{align*}
\sum_{p_0=1}^p C_j(p,p_0)\frac{q}{p_0(p_0+q)}
&=\sum_{p_0=1}^p \sum_{p_1=p_0}^p C_{j-1}(p,p_1)\frac{1}{p_1^2}
\biggl(\frac{1}{p_0}-\frac{1}{p_0+q}\biggr)\\
&=\sum_{p_1=1}^p C_{j-1}(p,p_1)\frac{1}{p_1^2}
\sum_{p_0=1}^{p_1} \biggl(\frac{1}{p_0}-\frac{1}{p_0+q}\biggr). 
\end{align*}
By using the identity 
\begin{align*}
\sum_{p_0=1}^{p_1} \biggl(\frac{1}{p_0}-\frac{1}{p_0+q}\biggr)
&=\sum_{p_0=1}^\infty 
\Biggl\{\biggl(\frac{1}{p_0}-\frac{1}{p_0+q}\biggr)
-\biggl(\frac{1}{p_0+p_1}-\frac{1}{p_0+p_1+q}\biggr)\Biggr\}\\
&=\sum_{p_0=1}^q\biggl(\frac{1}{p_0}-\frac{1}{p_0+p_1}\biggr)
=\sum_{q_1=1}^q\frac{p_1}{q_1(q_1+p_1)}, 
\end{align*}
we get 
\begin{align*}
\sum_{p_1=1}^p C_{j-1}(p,p_1)\frac{1}{p_1^2}
\sum_{p_0=1}^{p_1} \biggl(\frac{1}{p_0}-\frac{1}{p_0+q}\biggr)
&=\sum_{p_1=1}^p C_{j-1}(p,p_1)\frac{1}{p_1^2}
\sum_{q_1=1}^q\frac{p_1}{q_1(q_1+p_1)}\\
&=\sum_{q_1=1}^q \frac{1}{q_1^2}\sum_{p_1=1}^p
C_{j-1}(p,p_1)\frac{q_1}{p_1(p_1+q_1)}. 
\end{align*}
Finally, using the induction hypothesis and \eqref{eq:C_induction} again, 
we obtain 
\begin{align*}
\sum_{q_1=1}^q \frac{1}{q_1^2}\sum_{p_1=1}^p
C_{j-1}(p,p_1)\frac{q_1}{p_1(p_1+q_1)}
&=\sum_{q_1=1}^q \frac{1}{q_1^2}\sum_{q_0=1}^{q_1}
C_{j-1}(q_1,q_0)\frac{p}{q_0(q_0+p)}\\
&=\sum_{q_0=1}^qC_j(q,q_0)\frac{p}{q_0(q_0+p)}. 
\end{align*}
Thus we have shown \eqref{eq:C_duality}. 
\end{proof}

\begin{proof}[Proof of Theorem \ref{thm:sec3main}]
For $k=0,\ldots,n$, put 
\begin{align*}
E(k)&=\sum_{\substack{\infty=p_0\geq\cdots\geq p_k\geq 1\\
\infty=q_{n+1}\geq q_{n}\geq\cdots\geq q_{k+1}\geq 1}}
\prod_{\alpha=1}^{k}C_{j_\alpha}(p_{\alpha-1},p_{\alpha})p_\alpha^{-e_\alpha}
\prod_{\beta=k+1}^{n}C_{j_\beta}(q_{\beta+1},q_{\beta})q_\beta^{-e_{\beta-1}}
\cdot\frac{p_k^{e_k-1}q_{k+1}}{p_k+q_{k+1}},\\
F(k)&=\sum_{\substack{\infty=p_0\geq\cdots\geq p_k\geq 1\\
\infty=q_{n+1}\geq q_{n}\geq\cdots\geq q_{k+1}\geq 1}}
\prod_{\alpha=1}^{k}C_{j_\alpha}(p_{\alpha-1},p_{\alpha})p_\alpha^{-e_\alpha}
\prod_{\beta=k+1}^{n}C_{j_\beta}(q_{\beta+1},q_{\beta})q_\beta^{-e_{\beta-1}}
\cdot\frac{p_kq_{k+1}^{e_k-1}}{p_k+q_{k+1}}. 
\end{align*}
In particular, we have $E(0)=0$ (resp.\ $F(n)=0$) because of the factor 
$\frac{q_1}{\infty+q_1}$ (resp.\ $\frac{p_n}{p_n+\infty}$) 
(recall that we set $e_0=e_n=1$). 
For general $k$, one can verify that $E(k)+F(k)=X(k)$ by using 
\[\frac{p_k^{e_k-1}q_{k+1}}{p_k+q_{k+1}}+\frac{p_kq_{k+1}^{e_k-1}}{p_k+q_{k+1}}
=\begin{cases}1 & (\text{if }e_k=1),\\ 
p_kq_{k+1} & (\text{if }e_k=3). \end{cases}\]
Moreover, when $1\leq k\leq n$, one has 
\[\sum_{p_k=1}^{p_{k-1}}C_{j_k}(p_{k-1},p_k)p_k^{-e_k}
\frac{p_k^{e_k-1}q_{k+1}}{p_k+q_{k+1}}
=\sum_{q_k=1}^{q_{k+1}}C_{j_k}(q_{k+1},q_k)q_k^{-e_{k-1}}
\frac{p_{k-1}q_k^{e_{k-1}-1}}{p_{k-1}+q_k}\]
by Lemma \ref{lem:C_duality}, hence $E(k)=F(k-1)$. 
Therefore, 
\[\sum_{k=0}^n(-1)^kX(k)=\sum_{k=0}^n(-1)^k\bigl(E(k)+F(k)\bigr)
=\sum_{k=1}^n(-1)^kF(k-1)+\sum_{k=0}^{n-1}(-1)^kF(k)=0\]
as desired. 
\end{proof}

\bigskip
Finally, we prove Theorem \ref{thm:TWIT A_n} in the introduction. 

\begin{proof}[Proof of Theorem \ref{thm:TWIT A_n}]
Since we only use $\vece=(3,1,\ldots,3,1)$ here, 
we omit it in this proof. 

For $\vecj=(j_0,\ldots,j_n)$ and $\sigma\in\sym_{n+1}$, 
we put $\sigma(\vecj):=(j_{\sigma(0)},\ldots,j_{\sigma(n)})$. 
Then the statement $(A_n)$ is expressed as 
\begin{equation}
\sum_{\sigma\in\sym_{2n}}\Zmap_{2n+1}\bigl(\sigma(\vecj)_+\bigr)
\in\Q\cdot\pi^{2m+4n}
\qquad \bigl(\forall\vecj=(j_0,\ldots,j_{2n-1})\in\Z_{\geq 0}^{2n}\bigr). 
\end{equation}
Similarly, $(B_n)$ says that 
\begin{equation}
\sum_{\sigma\in\sym_{2n+1}}\Zmap_{2n+1}\bigl(\sigma(\vecj)^+\bigr)
\in\Q\cdot\pi^{2m+4n+2}
\qquad \bigl(\forall\vecj=(j_0,\ldots,j_{2n})\in\Z_{\geq 0}^{2n+1}\bigr). 
\end{equation}

Now suppose that $(A_l)$ and $(B_l)$ hold for all $l<n$. 
For $\vecj=(j_0,\ldots,j_{2n-1})$, 
we rewrite the identity \eqref{eq:3ext} as 
\begin{align*}
\Zmap_{2n+1}(\vecj_+)+\Zmap_{2n+1}(\vecj'_+)
&=\sum_{l=1}^n 
\Zmap_{2l-1}\bigl((\vecj|_{2l-1})^+\bigr)\cdot
\Zmap_{2n-2l+1}\bigl((\vecj'|_{2n-2l+1})^+\bigr)\\
&\quad 
-\sum_{l=1}^{n-1}
\Zmap_{2l+1}\bigl((\vecj|_{2l})_+\bigr)\cdot
\Zmap_{2n-2l+1}\bigl((\vecj'|_{2n-2l})_+\bigr). 
\end{align*}
Summing up over $\sigma\in\sym_{2n}$, we obtain 
\begin{equation}\label{eq:A_n}
\begin{split}
2\sum_{\sigma\in\sym_{2n}}\Zmap_{2n+1}\bigl(\sigma(\vecj)_+\bigr)
&=\sum_{l=1}^n \sum_{\sigma\in\sym_{2n}}
\Zmap_{2l-1}\bigl((\sigma(\vecj)|_{2l-1})^+\bigr)\cdot
\Zmap_{2n-2l+1}\bigl((\sigma(\vecj)'|_{2n-2l+1})^+\bigr)\\
&\quad 
-\sum_{l=1}^{n-1} \sum_{\sigma\in\sym_{2n}}
\Zmap_{2l+1}\bigl((\sigma(\vecj)|_{2l})_+\bigr)\cdot
\Zmap_{2n-2l+1}\bigl((\sigma(\vecj)'|_{2n-2l})_+\bigr). 
\end{split}
\end{equation}

Fix an integer $l$ such that $1\leq l\leq n$, 
and take a subset $S=\{s_0,\ldots,s_{2(l-1)}\}$ of $\{0,\ldots,2n-1\}$ 
of cardinality $2l-1$. We also write 
$\{0,\ldots,2n-1\}\setminus S=\{t_0,\ldots,t_{2(n-l)}\}$, 
and put $\vec{\jmath}_1=(j_{s_0},\ldots,j_{s_{2(l-1)}})$, 
$\vec{\jmath}_2=(j_{t_0},\ldots,j_{t_{2(n-l)}})$. 
Then we have 
\begin{align}
\sum_{\substack{\sigma\in\sym_{2n}\\ \{\sigma(0),\ldots,\sigma(2(l-1))\}=S}}
&\Zmap_{2l-1}\bigl((\sigma(\vecj)|_{2l-1})^+\bigr)\cdot
\Zmap_{2n-2l+1}\bigl((\sigma(\vecj)'|_{2n-2l+1})^+\bigr)\notag\\
&=\sum_{\tau_1\in\sym_{2(l-1)+1}}
\Zmap_{2(l-1)+1}\bigl(\tau_1(\vec{\jmath}_1)^+\bigr)
\sum_{\tau_2\in\sym_{2(n-l)+1}}
\Zmap_{2(n-l)+1}\bigl(\tau_2(\vec{\jmath}_2)^+\bigr). \label{eq:B_l}
\end{align}
If we set $m_1=\sum_ij_{s_i}$ and $m_2=\sum_ij_{t_i}$, 
the assumptions $(B_{l-1})$ and $(B_{n-l})$ implies that the right hand side 
of \eqref{eq:B_l} belongs to 
$\Q\cdot\pi^{2m_1+4(l-1)+2}\cdot\pi^{2m_2+4(n-l)+2}=\Q\cdot\pi^{2m+4n}$. 
Therefore, by summing up over all $l$ and $S$, we obtain that 
\[\sum_{l=1}^n \sum_{\sigma\in\sym_{2n}}
\Zmap_{2l-1}\bigl((\sigma(\vecj)|_{2l-1})^+\bigr)\cdot
\Zmap_{2n-2l+1}\bigl((\sigma(\vecj)'|_{2n-2l+1})^+\bigr)
\in \Q\cdot\pi^{2m+4n}. \]
Similarly, $(A_l)$ for $l=1,\ldots,n-1$ imply that 
\[\sum_{l=1}^{n-1} \sum_{\sigma\in\sym_{2n}}
\Zmap_{2l+1}\bigl((\sigma(\vecj)|_{2l})_+\bigr)\cdot
\Zmap_{2n-2l+1}\bigl((\sigma(\vecj)'|_{2n-2l})_+\bigr)
\in \Q\cdot\pi^{2m+4n}. \]
Thus, from \eqref{eq:A_n}, we conclude that 
\[\sum_{\sigma\in\sym_{2n}}\Zmap_{2n+1}\bigl(\sigma(\vecj)_+\bigr)
\in \Q\cdot\pi^{2m+4n}. \]
Now the proof of Theorem \ref{thm:TWIT A_n} is complete. 
\end{proof}


\begin{thebibliography}{99}
\bibitem{B1} F.~Brown, 
\textit{Mixed Tate motives over $\Z$}, 
to appear in Ann.\ Math.\ 175 (2012). 

\bibitem{B2} F.~Brown, 
\textit{On the decomposition of motivic multiple zeta values}, 
to appear in `Galois-Teichm{\"u}ller theory and Arithmetic Geometry', 
Advanced Studies in Pure Mathematics. 

\bibitem{H} M.~E.~Hoffman, 
\textit{The algebra of multiple harmonic series}, 
J.\ of Algebra 194 (1997), 477--495.

\bibitem{IKOO} K.~Ihara, J.~Kajikawa, Y.~Ohno, J.~Okuda, 
\textit{Multiple zeta values vs.\ Multiple zeta-star values}, 
J.\ of Algebra 332 (2011), 187--208.

\bibitem{TWIT} K.~Imatomi, T.~Tanaka, K.~Tasaka, N.~Wakabayashi, 
\textit{On some combinations of multiple zeta-star valuess}, 
preprint (arXiv:0912.1951), 2010.

\bibitem{KO} M.~Kaneko, Y.~Ohno, 
\textit{On a kind of duality of multiple zeta-star values}, 
Int.\ J.\ Number Theory 6 (2010), no. 8, 1927--1932.

\bibitem{Li} Z.~Li, 
\textit{On a conjecture of Kaneko and Ohno}, 
preprint (arXiv:1106.5103), 2011.

\bibitem{OZ} Y.~Ohno, W. Zudilin, 
\textit{Zeta-stars}, 
Commun.\ Number Theory Phys.\ 2 (2008), 325--347.

\bibitem{Y} S.~Yamamoto, 
\textit{Explicit evaluation of certain sums of multiple zeta-star values}, 
preprint, 2012. 

\bibitem{Ya} C.~Yamazaki, 
\textit{Duality for multiple zeta-star values and 
generalized hypergeometric functions}, 
Doctoral Thesis, Kinki University, 2012. 

\bibitem{Z} D.~Zagier, 
\textit{Evaluation of the multiple zeta values 
$\zeta(2,\ldots,2,3,2,\ldots,2)$}, to appear in Ann.\ Math. 

\bibitem{Zl} S.~A.~Zlobin, 
\textit{Generating functions for the values of a multiple zeta function}, 
Vestnik Moskov.\ Univ.\ Ser.~I Mat.\ Mekh.\ no.~2 (2005), 55--59; 
English transl., Moscow Univ.\ Math.\ Bull.\ 60:2 (2005), 44--48.
\end{thebibliography}
\end{document}